\newtheorem{thm}{Theorem}
\newtheorem{prop}[thm]{Proposition}
\newtheorem{lem}[thm]{Lemma}
\newtheorem{cor}[thm]{Corollary}
\theoremstyle{definition}
\newtheorem{definition}[thm]{Definition}
\newtheorem{example}[thm]{Example}
\theoremstyle{remark}
\newtheorem{remark}[thm]{Remark}
\numberwithin{equation}{section}
\DeclareMathOperator{\GenCon}{GenCon}
\newcommand{\Z}{\mathbb{Z}}
\newcommand{\R}{\mathbb{R}}
\begin{document}

\begin{abstract}
    We study the relationship between a notion of medium-scale Ricci curvature for finitely generated groups and that of hyperbolicity in the sense of Gromov. We give an example of a generating set that gives zero curvature with positive density for the free group of rank 2. We prove that, by making the radius used in computing the curvature sufficiently large, we can always have negative curvature outside of a ball in non-elementary hyperbolic groups. On the other hand, we give an example of a group which has negative curvature for all non-identity points but is not hyperbolic.
\end{abstract}

\title{Medium-Scale Ricci Curvature for Hyperbolic Groups}
\author{Andrew Keisling}
\date{January 2021}

\maketitle
\tableofcontents

\section*{Acknowledgments}
The author would like to thank Thang Nguyen for providing helpful mentorship and feedback throughout this project. They would also like to thank the University of Michigan Math REU for providing the working conditions that made this project possible. The author was partially supported by NSF grant DMS-2003712.

\section{Introduction}

In Riemannian geometry, Ricci curvature measures the difference of the average distance between points on infinitesimal spheres and the distance between their centers. In \cite{Oll09}, Ollivier generalized this intuition to more general metric spaces equipped with Markov chains with his \emph{transportation curvature}. Here we focus on the \emph{conjugation curvature} for finitely generated groups introduced by Bar-Natan, Duchin, and Kropholler in \cite{BDK20}. They called this curvature \emph{medium-scale} because, rather than being based on infinitesimal or global properties, it depends on some fixed comparison radius.

In this paper, following the notation introduced by Bar-Natan et al., we use $\kappa_k^\mathcal{S}(g)$ to denote the $k$-spherical conjugation curvature at the group element $g$. As a special case, we write $\kappa(g)$ to denote the $1$-spherical conjugation curvature. It follows from the definition that this type of curvature depends on the choice of generating set of the group. 

The first question that we seek to answer is whether the free group of rank 2 must have negative curvature for all but finitely many points with respect to any generating set. Such a result would answer a question asked by Nguyen and Wang in \cite[Question 5.3]{NW20} and show whether torsion-free, non-elementary hyperbolic groups can have infinitely many points with non-negative curvature. We show that they indeed can by proving the following theorem:

\begin{thm} \label{intro zero curvature dense}
    There exists a generating set of the free group $F_2 = \langle a,b \rangle$ such that $F_2$ has zero curvature with positive density. $F_2$ also has negative curvature with positive density with respect to this generating set.
\end{thm}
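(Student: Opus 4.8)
The plan is to produce an explicit finite generating set $\mathcal{S}$ of $F_2$ — strictly larger than $\{a,b\}$, so that $\mathrm{Cay}(F_2,\mathcal{S})$ is redundant and has many non-unique geodesics (for instance by adjoining products such as $ab$, $ba$, or suitable conjugates of $a,b$) — and then to pin the function $g\mapsto\kappa(g)$ down on two explicit positive-density families. The first step is a normal-form lemma for the word length $|g|_{\mathcal{S}}$: writing $g$ as its reduced word $w$ over $\{a,b\}$, one shows $|g|_{\mathcal{S}}=|w|-M(w)$, where $M(w)$ is the maximum number of non-overlapping occurrences in $w$ of the ``compressible'' subwords attached to the extra generators, and moreover every $\mathcal{S}$-geodesic of $g$ comes from such a maximum matching. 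The essential point is that a geodesic $\mathcal{S}$-word, once expanded over $\{a,b\}$, is automatically reduced, so shorter $\mathcal{S}$-expressions would violate maximality of $M$; this reduces every later length computation to combinatorics of reduced words and, crucially, tells us exactly which generators can begin or end a geodesic of $g$.

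With the normal form in hand I would isolate a positive-density family $\mathcal{F}_0$ whose reduced words have a prescribed ``balanced'' shape, tuned so that the $2|\mathcal{S}|$ conjugates $x^{-1}gx$ ($x\in\mathcal{S}\cup\mathcal{S}^{-1}$) split into those with $|x^{-1}gx|_{\mathcal{S}}>|g|_{\mathcal{S}}$ and those with $|x^{-1}gx|_{\mathcal{S}}\le|g|_{\mathcal{S}}$ in such a way that the changes in $M$ cancel in the sum. For each $x$ one reads off, from the branching of geodesics at the two ends of $g$, whether conjugation by $x$ raises, lowers, or fixes the length, and then verifies
$$\sum_{x\in\mathcal{S}\cup\mathcal{S}^{-1}}\bigl(|x^{-1}gx|_{\mathcal{S}}-|g|_{\mathcal{S}}\bigr)=0.$$
Feeding this into $\kappa(g)=1-|g|_{\mathcal{S}}^{-1}\,|\mathrm{Sph}_1^{\mathcal{S}}|^{-1}\sum_{x}|x^{-1}gx|_{\mathcal{S}}$ yields $\kappa(g)=0$ for all sufficiently long $g\in\mathcal{F}_0$. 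For the negative-curvature assertion I would take a complementary positive-density family $\mathcal{F}_-$ — e.g. long powers of a single generator, or ``generic'' reduced words with no exploitable end-branching — for which the same analysis gives a strictly positive sum and hence $\kappa(g)<0$; such elements should in fact be abundant.

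It remains to check that $\mathcal{F}_0$ and $\mathcal{F}_-$ each have positive density, which I would do by a transfer-matrix count of reduced words over $\{a,b\}$ meeting the respective shape constraints and comparing its exponential growth rate with that of $|B_n^{\mathcal{S}}|$. The main obstacle is the length bookkeeping in the previous paragraph: because $\mathcal{S}$ is redundant, $\mathcal{S}$-geodesics are genuinely non-unique, so for each of the conjugates $x^{-1}gx$ one must rule out an ``unexpectedly short'' $\mathcal{S}$-word and track precisely how the maximum matching $M$ behaves under prepending and appending a generator and its inverse; engineering the shape of $\mathcal{F}_0$ so that these changes cancel exactly (rather than merely approximately) is the real work, after which the density count and the final arithmetic are routine.
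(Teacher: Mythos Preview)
Your high-level strategy---enlarge the generating set so that geodesics branch, isolate a family $\mathcal{F}_0$ on which the conjugation-length changes cancel, and then count---is exactly the shape of the paper's argument. But what you have written is a plan, not a proof: you have not specified $\mathcal{S}$, you have not exhibited $\mathcal{F}_0$, and you explicitly flag the length bookkeeping as ``the real work'' without doing it. That is precisely where the content of the theorem lies, and it is not routine.

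Concretely, the paper takes $\mathcal{S}=S_\gamma=\{a^{\pm1},b^{\pm1},w^{\pm1}\}$ with $w=ababa$. The key identity is
\[
aba \;=\; w\,a^{-1}b^{-1} \;=\; b^{-1}a^{-1}\,w,
\]
so an element whose geodesic spelling begins with $aba\cdots$ also has geodesic spellings beginning with each of $a$, $w$, and $b^{-1}$---all three elements of the ``half'' $S_\gamma^1=\{a,b^{-1},w\}$. For $g=(aba)^{\pm1}\,h\,(aba)^{\pm1}$ this three-way branching at both ends is exactly what makes $\sum_{s\in S_\gamma}(|s^{-1}gs|-|g|)=0$. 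Your suggested additions ``such as $ab$, $ba$'' give only two-way branching at an end; the paper in fact proves (Proposition~\ref{negative curvature example}, for $S_\beta=\{a^{\pm1},b^{\pm1},(aba)^{\pm1}\}$) that two-way branching forces $\kappa(g)<0$ everywhere. So the existence of a generating set that produces zero curvature is not automatic, and your proposal does not yet contain the mechanism that makes one exist.

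Two smaller points. First, your normal form $|g|_{\mathcal{S}}=|w|-M(w)$ is not used in the paper and would itself require a nontrivial argument for any $\mathcal{S}$ of this kind; the paper instead controls only the first and last letters of geodesics via the partition $F_2\setminus\{1\}=A_a\sqcup A_{a^{-1}}\sqcup A_b\sqcup A_{b^{-1}}$ (Lemmas~\ref{first letter lemma}--\ref{b double a}), which is all that is needed. Second, the density step in the paper is more elementary than a transfer-matrix count: one shows directly that at least a $1/36$ fraction of each sphere $S_n$ has a geodesic spelling beginning and ending in $\{a,a^{-1}\}$, and then pads by $(aba)^{\pm1}$ (respectively $a^{\pm1}$) on both sides via an explicit bijection to obtain the zero-curvature (respectively negative-curvature) families.
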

By positive density, we mean that there exist $\epsilon>0$, $N\geq0$ such that the fraction of points satisfying the desired property in the ball of radius $n$ is bounded below by $\epsilon$ for all $n\geq N$.

% It is natural to ask whether there also exists a generating set of $F_2$ giving positive curvature for infinitely many points. We do not have the answer to this question, but we do show that we can have infinitely many points with positive curvature in virtually free groups.

% \begin{thm} \label{intro positive virtual}
%     There exists a generating set of the virtually free group $\Z_2*\Z_3 = \langle a,b|a^2=b^3=1\rangle$ and a sequence $(g_n)_{n=1}^\infty$ that tends to $\infty$ in $\Z_2*\Z_3$ such that $\kappa(g_n) > 0$ for all $n$.
% \end{thm}

In addition to this result for $F_2$, we also explore the relationship between conjugation curvature and non-elementary hyperbolic groups in general. By \cite[Theorem 20]{BDK20}, there exist non-elementary hyperbolic groups with sequences of elements $(g_n)_{n=1}^\infty$ such that $\kappa(g_n)>0$ for all $n$. We prove the following theorem to show that, by using a sufficiently large comparison radius $k$ to compute the curvature, we can have at most finitely many points with non-negative curvature in any non-elementary hyperbolic group:

\begin{thm} \label{intro negative curvature for hyperbolic}
    Let $G$ be a non-elementary hyperbolic groups with some fixed generating set $S$. Then there exists $K \geq 1$ such that, for every $k\geq K$ and $g \in G$ outside the ball of radius $4k$, we have $\kappa_k^{\mathcal{S}}(g)<0$.
\end{thm}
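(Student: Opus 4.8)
The plan is to reduce the inequality $\kappa_k^{\mathcal{S}}(g)<0$ to an averaging statement about displacement and then establish that statement from the coarse geometry of the axis of $g$. Unwinding the definition of the $k$-spherical conjugation curvature from \cite{BDK20}, its sign at $g$ is governed by whether conjugating $g$ by a uniformly random $h\in S_k=\{h\in G:|h|_S=k\}$ shortens or lengthens $g$; concretely, $\kappa_k^{\mathcal{S}}(g)<0$ is equivalent to
\[
\frac{1}{|S_k|}\sum_{h\in S_k}|h^{-1}gh|_S\;>\;|g|_S,
\qquad\text{i.e.}\qquad
\frac{1}{|S_k|}\sum_{h\in S_k}d_S(h,gh)\;>\;d_S(e,g).
\]
Fix the hyperbolicity constant $\delta$ of $(G,S)$. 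First I would choose $K$ large enough, in terms of $\delta$ and $S$, that $|g|_S>4k\ge 4K$ forces $g$ to have infinite order, since torsion elements of a hyperbolic group have uniformly bounded word length. Then $g$ acts on the Cayley graph as a hyperbolic isometry with translation length $\tau(g)\ge\tau_0>0$ and an axis $A_g$, and — using the uniform lower bound $\tau_0$ on translation lengths together with the torsion bound — $A_g$ can be taken to be a quasi-geodesic line with constants depending only on $(G,S)$. Write $D:=d_S(e,A_g)$.

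The second step uses the standard displacement estimate $\big|\,d_S(x,gx)-2\,d_S(x,A_g)-\tau(g)\,\big|\le C_1\delta$ for a hyperbolic isometry, with $C_1$ universal. Applying it at $x=h$ and at $x=e$ and subtracting — the $\tau(g)$ terms cancel — reduces the target inequality to
\[
\frac{1}{|S_k|}\sum_{h\in S_k}d_S(h,A_g)\;>\;D+2C_1\delta,
\]
i.e.\ to the assertion that a random point of the sphere $S_k$ lies on average appreciably farther from the axis of $g$ than $e$ does. (In a free group with a free basis this last inequality can be checked directly, by counting the cancellations at the two seams of $h^{-1}gh$.)

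This is where non-elementarity enters. The geometric input is that a geodesic $[e,h]$ can approach $A_g$ only by first running toward the nearest-point projection $\pi_{A_g}(e)$, and in a $\delta$-hyperbolic space the extent of this approach is measured by a Gromov product:
\[
d_S(h,A_g)\;\ge\;D+|h|_S-2\,(h\mid A_g)_e-C_2\delta,\qquad h\in S_k,
\]
where $(h\mid A_g)_e:=\sup_{q\in A_g}(h\mid q)_e$. I would then bound this Gromov product stochastically: any $h\in S_k$ with $(h\mid A_g)_e\ge m$ lies within distance $k-m+O(\delta)$ of a fixed quasi-geodesic segment of length $O(k)$ inside $[e,\pi_{A_g}(e)]\cup A_g$, hence within a ``thin'' set of cardinality $O\!\big(k\,|B_{k-m}|\big)=O\!\big(k\,\lambda^{k-m}\big)$, where $\lambda>1$ is the exponential growth rate and $|S_k|\asymp\lambda^{k}$ precisely because $G$ is non-elementary hyperbolic. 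Thus the fraction of such $h$ is $O(k\lambda^{-m})$, and summing over $m\ge1$ gives $\frac{1}{|S_k|}\sum_{h\in S_k}(h\mid A_g)_e=O(\log k)$. Plugging in,
\[
\frac{1}{|S_k|}\sum_{h\in S_k}d_S(h,A_g)\;\ge\;D+k-O(\log k)-C_2\delta\;>\;D+2C_1\delta
\]
once $k$ exceeds a threshold depending only on $(G,S)$; enlarging $K$ to absorb that threshold completes the argument. These estimates are uniform in $D$, so nothing degrades when $g$ is far from cyclically reduced; the hypothesis $|g|_S>4k$ is used only to force infinite order and to keep $g$ long relative to $k$.

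The step I expect to be the main obstacle is securing the two uniformity statements the argument rests on: that the axes $A_g$ are quasi-geodesics with constants independent of $g$, and that the projection inequality $d_S(h,A_g)\ge D+|h|_S-2(h\mid A_g)_e-C_2\delta$ holds with a \emph{uniform} constant over all infinite-order $g$, including the delicate regime where $D$ is large (so $e$ and $h$ are both far from the axis and the naive ``project and return'' picture degenerates). These rely on the positive lower bound for translation lengths in hyperbolic groups and on bounded-geodesic-image/strong-contraction properties of quasi-convex sets, and the fiddly point will be tracking how $K$ depends on $\delta$, $|S|$, and $\lambda$. The counting step — a thin neighbourhood against the exponential growth of $S_k$ — is robust but genuinely uses non-elementarity, consistent with the theorem being false for elementary hyperbolic groups.
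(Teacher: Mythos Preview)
Your approach is correct but takes a genuinely different route from the paper's. You pass through the translation axis $A_g$ of $g$, invoking the displacement formula $d(x,gx)\approx 2d(x,A_g)+\tau(g)$ and then comparing the average distance from $S_k$ to $A_g$ with $D=d(e,A_g)$. This requires the machinery you flag as delicate: uniform quasi-geodesic constants for axes (via the positive lower bound on stable translation lengths), the projection inequality for quasi-convex sets, and a covering of a length-$O(k)$ segment that yields the bound $\mathbb{E}[(h\mid A_g)_e]=O(\log k)$. The paper sidesteps all of this by working directly with the four points $1,s,g,gs$. It writes the exact identity
\[
d(s,gs)=|g|+2k-2(g,s)_1-2(s,gs)_g,
\]
uses a tree-approximation lemma (this is where $|g|>4k$ is actually used) to show $|(1,gs)_g-(s,gs)_g|\le C_1$, rewrites $(1,gs)_g=(g^{-1},s)_1$ by left-invariance, and then proves that for any fixed point $p$ the average $\tfrac{1}{|S_k|}\sum_{s\in S_k}(p,s)_1$ is bounded by a constant $C_4$ \emph{independent of $k$}. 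The comparison is then simply $2k$ versus $4C_4+2C_1$. So the paper never needs $g$ to have infinite order, never constructs an axis, and obtains an $O(1)$ bound where you get $O(\log k)$; your extra logarithm comes from averaging a Gromov product against an entire line rather than a single point. Both arguments hinge on the same exponential-growth input from non-elementarity, but the paper's decomposition is shorter and avoids the uniformity issues you correctly identify as the main obstacle in your version.
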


For this result for hyperbolic groups to be meaningful, it is significant that there exist infinite groups that have positive $k$-conjugation curvature for infinitely many points for arbitrarily large $k$. Although the existence of such groups is non-obvious, Kropholler and Mallery proved in \cite[Theorem 4.1]{KM20} that the discrete Heisenberg group $H(\Z) = \langle a,b|[a,b]~\text{is central}\rangle$ satisfies this property.

On the other hand, we show that negative curvature for all non-identity points does not imply that a group is hyperbolic by proving the following counterexample:

\begin{thm} \label{intro negative non-hyperbolic}
    The group $\Z*\Z^2 = \langle t,a,b|ab=ba\rangle$ is not hyperbolic, but there exists a generating set such that, for all $g\in\Z*\Z^2-\{1\}$, we have $\kappa(g)<0$.
\end{thm}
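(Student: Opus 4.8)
The plan is to treat the two assertions separately, the curvature statement being the substantive one. Non-hyperbolicity is immediate: $\Z*\Z^2$ contains $\langle a,b\rangle\cong\Z^2$, which is a factor of the free product and hence a retract (kill $t$), so it is undistorted; were $\Z*\Z^2$ hyperbolic, $\Z^2$ would then be quasi-isometrically embedded in a hyperbolic space and so itself hyperbolic, which is absurd. Equivalently, a hyperbolic group cannot contain a copy of $\Z\oplus\Z$.

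For the curvature bound I propose to use the \emph{standard} generating set $S=\{t^{\pm1},a^{\pm1},b^{\pm1}\}$ arising from the free-product decomposition, and to prove the uniform estimate $\sum_{s\in S}\bigl(|s^{-1}gs|-|g|\bigr)\ge 4$ for every $g\in\Z*\Z^2\setminus\{1\}$. Since $|S|=6$, this yields $\kappa(g)=1-\tfrac{1}{|g|}\cdot\tfrac{1}{|S|}\sum_{s\in S}|s^{-1}gs|\le-\tfrac{2}{3|g|}<0$. The mechanism is that word length in $\Z*\Z^2$ is additive over the reduced syllable decomposition $g=g_1g_2\cdots g_n$, where the $g_i$ alternate between $\langle t\rangle$ and $\langle a,b\rangle$; so conjugating by a single generator only alters the first and last syllables, possibly inserting or deleting one syllable at each end. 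I would record the one-letter facts $|xw|-|w|\in\{-1,+1\}$ for $w$ a nontrivial factor element and $x$ a generator of that factor, together with the cancellation identity that $(|xw|-|w|)+(|x^{-1}w|-|w|)$ equals $2$ when the $x$-exponent of $w$ vanishes and $0$ otherwise, plus the analogues inside $\langle t\rangle$.

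Then I would run a case analysis on the first and last syllables of $g$: (i) if $g\in\langle a,b\rangle\setminus\{1\}$, the four conjugations by $a^{\pm1},b^{\pm1}$ fix $g$ while $t^{-1}gt$ and $tgt^{-1}$ each add two syllables, giving total exactly $4$; (ii) if $g\in\langle t\rangle\setminus\{1\}$, symmetrically the four conjugations by $a^{\pm1},b^{\pm1}$ each lengthen by $2$, total $8$; (iii) if $g\notin\langle a,b\rangle$ but begins and ends with $\langle a,b\rangle$-syllables, the two $t^{\pm1}$-conjugations contribute $+2$ each, and by the cancellation identity $(|a^{-1}ga|-|g|)+(|aga^{-1}|-|g|)\ge0$ and likewise for $b$, so the total is $\ge4$; (iv) if exactly one end of $g$ is an $\langle a,b\rangle$-syllable, all four $a^{\pm1},b^{\pm1}$-conjugations have change in $\{0,+2\}$ and sum to $\ge4$, while exactly one of $t^{-1}gt,\,tgt^{-1}$ has change $0$ and the other $+2$; (v) if $g\notin\langle t\rangle$ but begins and ends with $\langle t\rangle$-syllables, the two $t^{\pm1}$-conjugations have changes summing to $0$ while all four $a^{\pm1},b^{\pm1}$-conjugations add two syllables, total $8$. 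The conceptual thread is that the free factor $\langle t\rangle$ is never absorbed by the abelian part — no nontrivial element outside $\langle t\rangle$ commutes with $t$, and centralizers in $\Z*\Z^2$ are cyclic or conjugate into $\langle a,b\rangle$ — so some conjugation always lengthens $g$ by a definite amount that the flat directions of $\Z^2$ cannot offset.

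I expect the only real work to be bookkeeping. One must carefully handle the degenerate sub-cases in which a syllable collapses under conjugation (for instance when the leading $\langle a,b\rangle$-syllable is exactly $a$, so $a^{-1}ga$ has one fewer syllable), checking that the syllable count and the additive length formula remain correct; and one must confirm strictness — that $\kappa(g)$ is never $0$ — which the uniform lower bound of $4$ on the total length change secures. Organizing these five cases together with their collapse sub-cases cleanly, for which a systematic enumeration of the conjugates $s^{-1}gs$ via the $\GenCon$ construction should be convenient, is the main, if modest, obstacle.
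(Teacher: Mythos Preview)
Your proposal is correct and follows essentially the same route as the paper: the same standard generating set $S=\{t^{\pm1},a^{\pm1},b^{\pm1}\}$, the same case analysis on whether the first and last syllables of $g$ lie in $\langle t\rangle$ or $\langle a,b\rangle$, and the same uniform bound $\sum_{s\in S}(|s^{-1}gs|-|g|)\ge 4$ yielding $\kappa(g)\le -\tfrac{2}{3|g|}$. Your packaging via the additive syllable-length formula and the pairwise cancellation identity is a slightly tidier way of phrasing what the paper does with its Lemmas~\ref{shortening lemma}--\ref{preserving lemma} and Remark~\ref{extending remark}, but the substance is identical.
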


Due to the result of Theorem \ref{intro zero curvature dense}, it is natural to ask whether every non-elementary hyperbolic group can have non-negative curvature for a positive density of points with respect to some generating set. A similar interesting question for future research is whether $F_2$ can have strictly positive curvature for infinitely many elements with respect to some generating set. Even if this is not the case, it seems possible that a similar proof to that of Theorem \ref{intro zero curvature dense} can be applied to the virtually free group $\Z_2*\Z_3 = \langle a,b|a^2=b^3=1\rangle$ to give positive curvature with positive density. If this turns out to be true, then another interesting question to study is whether the existence of order 2 elements gives a necessary and sufficient condition for strictly positive curvature for infinitely elements in non-elementary hyperbolic groups. One reason that this might be the case is that a generating set of odd size must contain an element of order 2 (in the definition of conjugation curvature, we use generating sets that are closed under inversion and never contain the group identity). Note that the generating set used to give positive curvature in a hyperbolic group in \cite[Theorem 20]{BDK20} has odd size, while $F_2$ has no elements of order 2 because it is torsion-free.

This paper is organized as follows: we begin by reviewing the definitions of conjugation curvature and hyperbolicity in Section \ref{background}, going over some simple  examples and proving lemmas that will help with the calculation of curvature. In Section \ref{free and virtually free}, we give examples of generating sets giving negative curvature for $F_2$ before proving Theorem \ref{intro zero curvature dense}. We conclude in Section \ref{relationship with hyperbolicity} by proving Theorems \ref{intro negative curvature for hyperbolic} and \ref{intro negative non-hyperbolic}.

\section{Background} \label{background}

In this section we review the definitions of conjugation curvature and Gromov's $\delta$-hyperbolicity, looking at some specfic examples. We then prove some preliminary results that will help us compute the curvature.

\subsection{Conjugation Curvature}

Let $G$  be a group that is generated by some finite set $S$, which we assume to be closed under inversion and not contain the group identity. Define the \emph{word length} $|g|_S$ of an element $g \in G$ as the minimum number of generators $g_1,\ldots ,g_n \in S$ such that $g=g_1\ldots g_n$. We call $g_1\ldots g_n$ a \emph{word} of generators and a \emph{spelling} of $g$. When $n=|g|_S$, we further call $g_1\ldots g_n$ a \emph{geodesic spelling} of $g$ (such spellings need not be unique). In general, a \emph{geodesic word} of generators is a word of generators such that no other word spelling the same group element has less generators.

The norm $|\cdot|_S:G\rightarrow\Z_{\geq 0}$ induces a left-invariant metric $d_S:G\times G\rightarrow \Z_{\geq 0}$ defined by $d_S(g,h) = |h^{-1}g|_S$, so that $d_S(g,1) = |g|_S$ where $1$ denotes the group identity. Although the metric space $(G,d_S)$ depends on the choice of generating set $S$, we will henceforth not include the subscript $S$ because the generating set will be clear from context.

Let $(X,d)$ be a metric space. Then we call $(X,d)$ \emph{geodesic} if any two points in $X$ can be connected by a continuous curve $c:[a,b]\subset\R\rightarrow X$ such that $d(c(t_1),c(t_2)) = |t_1-t_2|$ for all $t_1,t_2\in[a,b]$. 

Given a finitely generated group $G$ and finite generating set $S$, define the \emph{Cayley graph} of $(G,S)$ as the graph whose vertices are the elements of $G$ and whose edges connect the points $g,h\in G$ if $d(g,h) = 1$. This graph can be equipped with the natural graph metric such that the length of each edge is 1, and the distance between two points is the length of the shortest path connecting them. With this metric, the Cayley graph is a geodesic metric space such that the distance between two vertices is exactly the word distance between the corresponding group elements. An example of a Cayley graph is shown in Figure \ref{Z^2Cayley} on page \pageref{Z^2Cayley}.

\begin{figure}
    \centering
    \includegraphics[scale = 0.25]{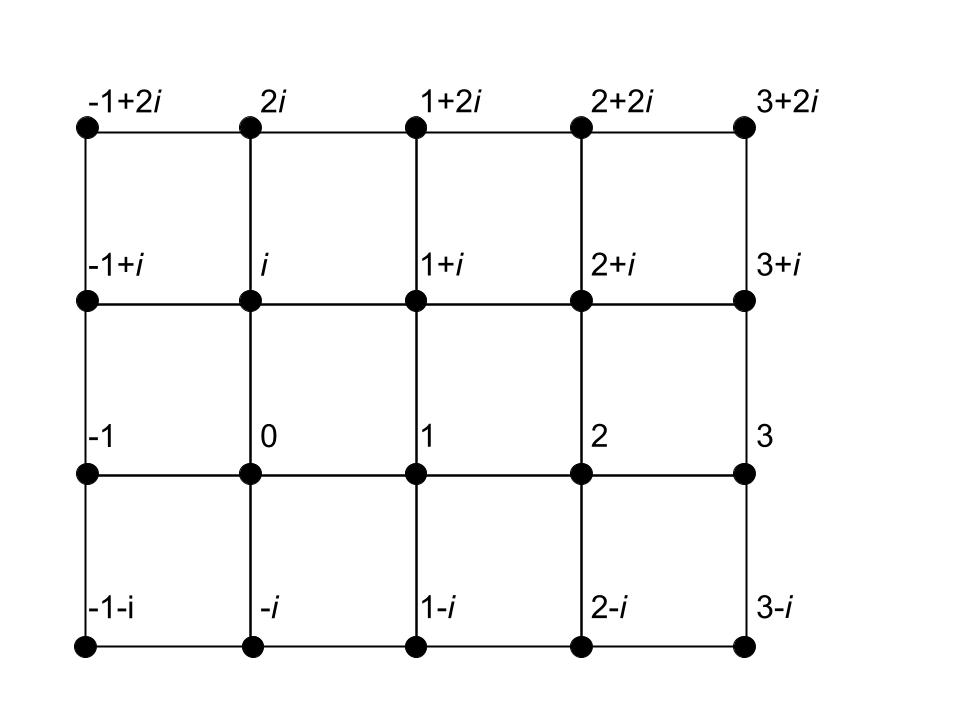}
    \caption{A portion of the Cayley graph of the additive group $\Z^2$ with respect to the generating set $S\coloneqq \{1,-1,i,-i\}$.}
    \label{Z^2Cayley}
\end{figure}

By combining this geometric structure for a finitely generated group $G$ with the algebraic structure of $G$, we can define a notion of curvature which shares many properties with Ricci curvature for Riemannian manifolds. For $k \geq 0$ and $g \in G$, define the ball and sphere of radius $k$ centered at $g$ to respectively be
\begin{gather*}
    B_k(g) \coloneqq \{h \in G|d(g,h)\leq k\}, \\
    S_k(g) \coloneqq \{h \in G|d(g,h) = k\}.
\end{gather*}
Usually we care about balls and spheres centered at the group identity $1$, which we write as $B_k$ and $S_k$ respectively. In particular, note that we have $S_1 = S$. Given any distinct $g,h \in G$ and $k\geq 1$, define the \emph{$k$-spherical comparison distance} between $g$ and $h$ to be
\begin{equation*}
    \mathcal{S}_k(g,h) \coloneqq \frac{1}{|S_k|}\sum_{s\in S_k}d(gs,hs).
\end{equation*}
The quantity $\mathcal{S}_k(g,h)$ measures the average distance between corresponding points on the spheres of radius $k$ about $g$ and $h$, where the correspondence is by left-translation. Comparing this quantity to the distance between $g$ and $h$, we can write the definition of the type of curvature that we are concerned with:
\begin{definition}\cite{BDK20}
    The \emph{$k$-spherical conjugation curvature} between $g,h\in G$ is
    \begin{equation*}
        \kappa_k^{\mathcal{S}}(g,h) \coloneqq \frac{d(g,h)-\mathcal{S}_k(g,h)}{d(g,h)} = \frac{d(g,h)-\frac{1}{|S_k|}\sum_{s\in S_k}d(gs,hs)}{d(g,h)}.
    \end{equation*}
\end{definition}
Notice that we get positive curvature when corresponding points on $S_k(g)$ and $S_k(h)$ are closer, on average, than $g$ and $h$, a property that agrees with classical Ricci curvature. By left-invariance of our metric, it suffices to consider the conjugation curvature between a point $g \in G-\{1\}$ and the group identity $1$, which we write as $\kappa_k^{\mathcal{S}}(g) \coloneqq \kappa_k^{\mathcal{S}}(g,1)$. For the case of $k=1$, we use the notation $\kappa(g) \coloneqq \kappa_1^{\mathcal{S}}(g)$. Note that
\begin{equation*}
    \mathcal{S}_1(g,1) = \frac{1}{|S_1|}\sum_{s\in S_1}d(gs,1s) = \frac{1}{|S|}\sum_{s\in S}|s^{-1}gs| \eqqcolon \GenCon(g),
\end{equation*}
where we use the notation $\GenCon(g)$ to emphasize the fact that this quantity is the average word length of the conjugation of $g$ by all the generators in $S$ (hence the name conjugation curvature). Therefore,
\begin{equation*}
    \kappa(g) = \frac{|g|-\GenCon(g)}{|g|},
\end{equation*}
and in order to find the sign of the curvature it suffices to evaluate whether conjugation by generators on average increases, decreases, or preserves the word length of $g$.

\begin{example}
    Suppose that $G$ is an abelian group. Then for all $g\in G-\{1\}$ we have
    \begin{align*}
        \kappa(g) &= \frac{|g|-\GenCon(g)}{|g|} = \frac{|g|-\frac{1}{|S|}\sum_{s\in S}|s^{-1}gs|}{|g|} \\&= \frac{|g|-\frac{1}{|S|}\sum_{s\in S}|gs^{-1}s|}{|g|} = \frac{|g|-\frac{1}{|S|}\sum_{s\in S}|g|}{|g|} \\&= \frac{|g|- |g|}{|g|} = 0.
    \end{align*}
    Therefore, abelian groups are everywhere ``flat'' with respect to the conjugation curvature.
\end{example}

\subsection{Hyperbolic Groups}

The notion of a \emph{hyperbolic group} was introduced by Gromov in \cite{Gro87}. This is a group that, when equipped with a word metric, satisfies the following:
\begin{definition}\cite{Gro87}
    A metric space $(X,d)$ is called \emph{$\delta$-hyperbolic} for $\delta\geq 0$ if, for all points $x,y,z,w \in X$, the \emph{four-point condition}
    \begin{equation*}
        (x,y)_w \geq \min\{(x,z)_w,(y,z)_w\}-\delta
    \end{equation*}
    is satisfied. Here the quantity $(p,q)_r$ denotes the \emph{Gromov product} of $p,q,r\in X$ defined by
    \begin{equation*}
        (p,q)_r \coloneqq \frac{1}{2}(d(p,r)+d(r,q)-d(p,q)).
    \end{equation*}
\end{definition}
Gromov proved in \cite{Gro87} that, if a group is $\delta$-hyperbolic with respect to some finite generating set $S$, then with respect to another finite generating set $S'$ it is $\delta'$-hyperbolic for some $\delta'\geq 0$. Therefore, we can speak of groups being hyperbolic, but in order to speak of a group being $\delta$-hyperbolic we must first fix a generating set.

Another useful formulation of hyperbolicity for geodesic metric spaces (such as a Cayley graph with a graph metric) is based on \emph{$\delta$-thin triangles}, which are defined as follows: Given a geodesic triangle $\Delta$ consisting of the vertices $x$, $y$, and $z$ connected by geodesic segments, there exists a map $\tau$ from $\Delta$ to a tripod whose endpoints are $\tau(x)$, $\tau(y)$, and $\tau(z)$ such that $d(p_1,p_2) = d(\tau(p_1),\tau(p_2))$ whenever $p_1$ and $p_2$ lie on the same edge of $\Delta$. The length of the branch of this tripod that ends in $\tau(z)$ is exactly the product $(x,y)_z$, and an analogous result holds for the other two branches. We call $\tau$ a \emph{triangle-tripod transformation}, and its image is unique up to isometry. We call $\Delta$ a $\delta$-thin triangle if, whenever $\tau(p) = \tau(q)$ for two points $p,q\in\Delta$, we have $d(p,q)\leq\delta$ (see Figure \ref{thin-triangle} on page \pageref{thin-triangle}).

\begin{figure}
    \centering
    \includegraphics[scale = 0.27]{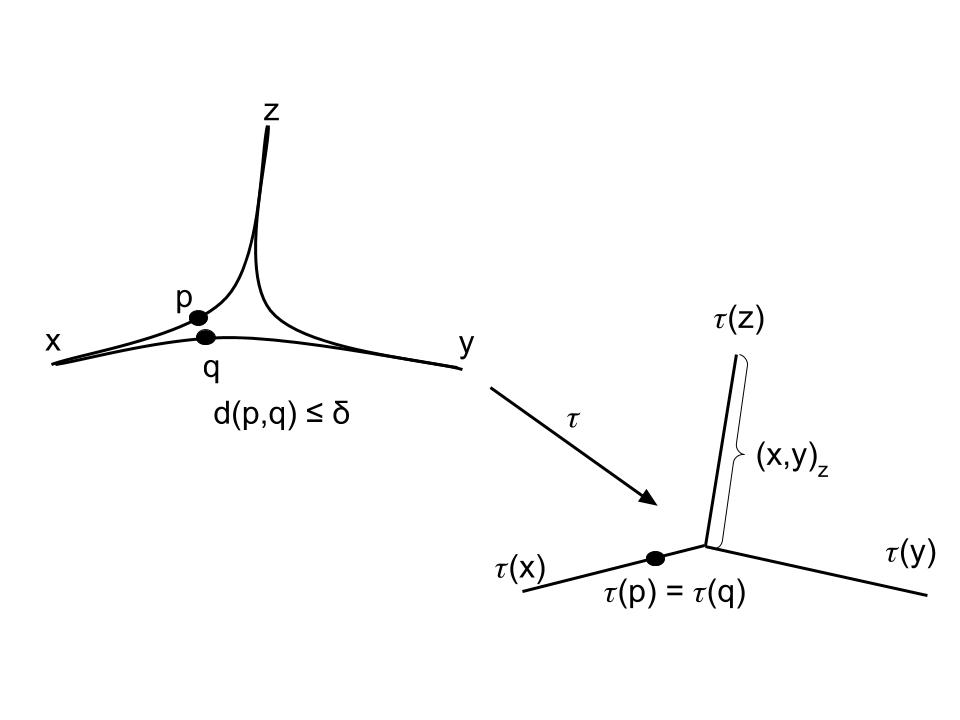}
    \caption{A $\delta$-thin triangle $\Delta$ in a geodesic metric space, along with its image under a triangle-tripod transformation $\tau$. Any points that map to the same point under $\tau$ must lie within a distance of $\delta$ in $\Delta$.}
    \label{thin-triangle}
\end{figure}

According to \cite[Proposition 6.3.C]{Gro87}, all geodesic triangles in a $\delta$-hyperbolic space are $2\delta$-thin, and if all geodesic triangles in a geodesic metric space are $\delta$-thin then the space is $2\delta$-hyperbolic. In this paper, when we write that a metric space is $\delta$-hyperbolic, the constant $\delta$ is determined by the four-point condition.

\begin{example}
    Consider a metric space $(X,d)$ which is a tree. Any geodesic triangle in $X$ will either be a tripod or a  single geodesic segment containing all three of its vertices. Therefore, all geodesic triangles in $X$ are $0$-thin and $X$ is a $0$-hyperbolic metric space. 
\end{example}

\subsection{Computing GenCon}

The following basic results will be used throughout this paper to transform the problem of computing the quantity $\GenCon(g)$ that appears in the formula for $\kappa(g)$ to that of finding spellings of $g$ that begin and end with certain generators:

\begin{lem}  \label{shortening lemma}
    Let $S$ be a generating set for the group $G$. Let $g \in G$ and $s \in S$. Then $|sg| = |g|-1$ if and only if there exists a geodesic spelling $g_1\ldots g_n$ of $g$ with respect to $S$ such that $g_1 = s^{-1}$. Similarly, $|gs| = |g|-1$ if and only if there exists a geodesic spelling $g_1\ldots g_n$ of $g$ with respect to $S$ such that $g_n = s^{-1}$.
\end{lem}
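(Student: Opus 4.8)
The plan is to prove both directions of the first claim (the second follows by the symmetric argument, or by applying the first to $g^{-1}$ and noting $|g^{-1}| = |g|$).

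For the easier direction, suppose $g$ has a geodesic spelling $g_1\cdots g_n$ with $g_1 = s^{-1}$, so $n = |g|$. Then $sg = s g_1 g_2 \cdots g_n = s s^{-1} g_2 \cdots g_n = g_2\cdots g_n$, which is a word of length $n-1$, so $|sg| \leq |g|-1$. For the reverse inequality, note that left-multiplication by $s$ changes word length by at most $1$: if $|sg| \leq |g|-2$ then $|g| = |s^{-1}(sg)| \leq |sg| + 1 \leq |g|-1$, a contradiction. Hence $|sg| = |g|-1$. (More simply: $|g| = |s^{-1}(sg)| \leq 1 + |sg|$, so $|sg| \geq |g|-1$, and combined with $|sg|\leq |g|-1$ we get equality.)

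For the converse, suppose $|sg| = |g|-1$. Let $h \coloneqq sg$, so $|h| = |g|-1$, and pick any geodesic spelling $h = h_1\cdots h_m$ with $m = |g|-1$. Then $g = s^{-1}h = s^{-1} h_1\cdots h_m$, which is a word of length $m+1 = |g|$; since this equals $|g|$, the word $s^{-1} h_1\cdots h_m$ is itself a geodesic spelling of $g$, and it begins with the generator $s^{-1}$. Setting $g_1 = s^{-1}$ and $g_i = h_{i-1}$ for $2 \leq i \leq n$ gives the desired geodesic spelling.

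This argument is essentially immediate from the definition of word length and the triangle-inequality-type bound $\bigl||sg| - |g|\bigr| \leq |s| = 1$, so there is no real obstacle; the only thing to be careful about is making the ``prepending a generator to a geodesic word of the product yields a geodesic word'' step explicit, and handling the second statement — about $|gs|$ and spellings ending in $s^{-1}$ — which is proved identically by considering right-multiplication (or by passing to inverses: $|gs| = |(gs)^{-1}| = |s^{-1}g^{-1}|$, and a geodesic spelling of $g^{-1}$ beginning with $s$ corresponds, under reversal and inversion of each letter, to a geodesic spelling of $g$ ending with $s^{-1}$).
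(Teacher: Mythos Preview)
Your proof is correct and follows essentially the same approach as the paper: both directions are proved by canceling or prepending $s^{-1}$ and invoking the triangle inequality $|sg| \geq |g|-1$, and the statement about $|gs|$ is reduced to the first by passing to inverses via $|gs| = |s^{-1}g^{-1}|$.
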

\begin{proof}
    Assume that a geodesic spelling $s^{-1}g_2\ldots g_n=g$ exists. Then $|sg| = |ss^{-1}g_2\ldots g_n| = |g_2\ldots g_n| \leq |g|-1$, and from the triangle inequality we have $|sg| \geq |g|-1$, so $|sg| = |g|-1$.
    
    Conversely, assume that $|sg| = |g|-1$. Then there exists a geodesic spelling $h_1\ldots h_{n-1}$ of $sg$, where $|g| = n$. Consider the word $s^{-1}h_1\ldots h_{n-1}$, which is equal to $s^{-1}sg = g$. Because this word consists of $n$ generators, it is a geodesic spelling of $g$, and it begins with $s^{-1}$ as desired. 
    
    Note that we have $|g|$ = $|g^{-1}|$ for all $g\in G$ because any geodesic spelling $g_1\ldots g_n$ of $g$ induces a spelling $g_n^{-1}\ldots g_1^{-1}$ of $g^{-1}$ with the same number of generators. The proof for $|gs|=|g|-1$ therefore follows from noting that $|gs| = |(gs)^{-1}| = |s^{-1}g^{-1}|$ and applying the result for when $|s^{-1}g^{-1}| = |g^{-1}|-1 = |g|-1$.
\end{proof}

\begin{lem}  \label{preserving lemma}
    Let $S$, $G$, $s$, and $g$ be as in the statement of Lemma \ref{shortening lemma} with $|g| = n$. Then $|sg| = |g|$ if and only if there exists a spelling $g_1\ldots g_{n+1}$ of $g$ with respect to $S$ such that $g_1 = s^{-1}$ and $g_2\ldots g_{n+1}$ is a geodesic word. Similarly, $|gs|=|g|$ if and only if there exists a spelling $g_1\ldots g_{n+1}$ of $g$ with respect to $S$ such that $g_{n+1} = s^{-1}$ and $g_1\ldots g_n$ is a geodesic word.
\end{lem}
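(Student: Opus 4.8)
The plan is to imitate the proof of Lemma~\ref{shortening lemma} almost verbatim, replacing ``the length drops by one'' with ``the length is preserved.'' The key observation is that $|sg| = |g|$ is witnessed not by prepending $s^{-1}$ to a geodesic spelling of $sg$ of length $n-1$, but to one of length $n$; prepending $s^{-1}$ then produces a (non-geodesic) spelling of $g$ of length $n+1$ whose tail is geodesic, and conversely. I will prove the first equivalence and then deduce the statement about $|gs|$ by the same inversion trick used in Lemma~\ref{shortening lemma}.

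For the easy direction, suppose a spelling $g_1 \ldots g_{n+1}$ of $g$ exists with $g_1 = s^{-1}$ and $g_2 \ldots g_{n+1}$ a geodesic word. Then $sg = s g_1 g_2 \ldots g_{n+1} = g_2 \ldots g_{n+1}$, and since $g_2 \ldots g_{n+1}$ is a geodesic word consisting of $n$ generators, $|sg| = n = |g|$. For the converse, assume $|sg| = |g| = n$ and pick a geodesic spelling $h_1 \ldots h_n$ of $sg$. The word $s^{-1} h_1 \ldots h_n$ equals $s^{-1} s g = g$, has $n+1$ generators, begins with $s^{-1}$, and has tail $h_1 \ldots h_n$ which is geodesic by construction; setting $g_1 = s^{-1}$ and $g_{i+1} = h_i$ for $1 \le i \le n$ gives the required spelling of $g$.

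For the statement about $|gs|$, I would reduce to the first part exactly as in Lemma~\ref{shortening lemma}: from $|gs| = |(gs)^{-1}| = |s^{-1} g^{-1}|$ and $|g| = |g^{-1}|$, the hypothesis $|gs| = |g|$ is equivalent to $|s^{-1} g^{-1}| = |g^{-1}|$. Applying the first part to the element $g^{-1}$ and the generator $s^{-1} \in S$ (note $S$ is closed under inversion) yields a spelling $g_1 \ldots g_{n+1}$ of $g^{-1}$ with $g_1 = (s^{-1})^{-1} = s$ and $g_2 \ldots g_{n+1}$ a geodesic word. Inverting and reversing, $g = g_{n+1}^{-1} \ldots g_1^{-1}$ is a spelling of $g$ whose last letter is $g_1^{-1} = s^{-1}$ and whose initial segment $g_{n+1}^{-1} \ldots g_2^{-1}$ is geodesic, since the reversal-and-inverse of a geodesic word is again geodesic (as already noted in the proof of Lemma~\ref{shortening lemma}).

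I do not expect a genuine obstacle here: the argument is a direct adaptation of the preceding lemma, and the only points needing care are bookkeeping ones — tracking that the generator playing the role of ``$s$'' after passing to inverses is $s^{-1}$ rather than $s$, and correctly invoking the fact that inverting and reversing a geodesic word produces a geodesic word.
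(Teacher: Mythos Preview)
Your proposal is correct and follows essentially the same approach as the paper's own proof: both directions of the first equivalence are handled identically, and the $|gs|$ case is reduced to the $|sg|$ case via inversion exactly as in Lemma~\ref{shortening lemma}. If anything, you are more explicit than the paper in spelling out the inversion step (the paper simply says ``follows from applying the result for $|sg|=|g|$ to the inverse of $gs$''), but the underlying argument is the same.
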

\begin{proof}
    Assume that a spelling $s^{-1}g_2\ldots g_{n+1}=g$ exists with $g_2\ldots g_{n+1}$ a geodesic word. Then $|sg| = |ss^{-1}g_2\ldots g_{n+1}| = |g_2\ldots g_{n+1}| = n = |g|$.
    
    Conversely, assume that $|sg| = |g|$. Then there exists a geodesic spelling $h_1\ldots h_n$ of $sg$, so $g = s^{-1}sg = s^{-1}h_1\ldots h_n$. This is indeed a spelling of $g$ that is one letter longer than a geodesic spelling and that begins with $s^{-1}$ as desired. 
    
    As in the proof of Lemma \ref{shortening lemma}, the proof for $|gs|=|g|$ follows from applying the result for $|sg|=|g|$ to the inverse of $gs$.
\end{proof}

\begin{remark}  \label{extending remark}
    By the triangle inequality for the word metric, $|g|-|s| \leq |sg| \leq |g|+|s|$, so $|sg| = |g|-1$, $|g|$, or $|g|+1$ because $|s| = 1$. Lemmas \ref{shortening lemma} and \ref{preserving lemma} give necessary and sufficient conditions for the first two of these three possibilities, so if the hypotheses of neither of these lemmas hold then $|sg| = |g|+1$. Similarly, if the necessary and sufficient conditions for $|gs|=|g|-1$ and $|gs|=|g|$ both fail to hold, then $|gs|=|g|+1$.
\end{remark}

\section{Curvature of Free Group} \label{free and virtually free}

Throughout this section, let $F_2$ denote the free group of rank 2 with the presentation $\langle a,b \rangle$. We begin by giving examples of generating sets of $F_2$ for which all non-identity points have negative curvature. We then prove Theorem \ref{intro zero curvature dense}.

\subsection{Negative Curvature Examples in Free Group} \label{negative curvature in free}

\begin{example}
    Consider the generating set $S_\alpha\coloneqq\{a,a^{-1},b,b^{-1}\}$ of $F_2$. Because the only way to shorten a word in these generators is by applying free reductions, any two words can only represent the same group element $g\in F_2-\{1\}$ if they differ in length by an even number of generators. Therefore, by Lemma \ref{preserving lemma}, there exist no generators $s\in S_\alpha$ or group elements $g\in F_2-\{1\}$ such that $|sg|=|g|$ or $|gs|=|g|$. Moreover, because this group is free, every group element has only one geodesic spelling with respect to $S_\alpha$. For each $g\in F_2-\{1\}$, we therefore have by Lemma \ref{shortening lemma} that there exists only one $s\in S_\alpha$ such that $|sg| = |g|-1$ and only one $s\in S_\alpha$ such that $|gs| = |g|-1$. For all other $s\in S_\alpha$ we have $|sg|=|g|+1$ and $|gs|=|g|+1$, so we can conclude that 
    \begin{equation*}
        \kappa(g) = \frac{|g|-\GenCon(g)}{|g|} = \frac{|g|-\frac{1}{4}\sum_{s\in S_\alpha} |s^{-1}gs|}{|g|} = \frac{|g|-\frac{1}{4}(4|g|+4)}{|g|} = -\frac{1}{|g|}.
    \end{equation*}
    The Cayley graph of $(F_2,S_\alpha)$ is shown in Figure \ref{F2Cayley} on page \pageref{F2Cayley}. Note that this graph is a tree, proving that $F_2$ is a hyperbolic group.
\end{example}

\begin{figure}
    \centering
    \includegraphics[scale = 0.25]{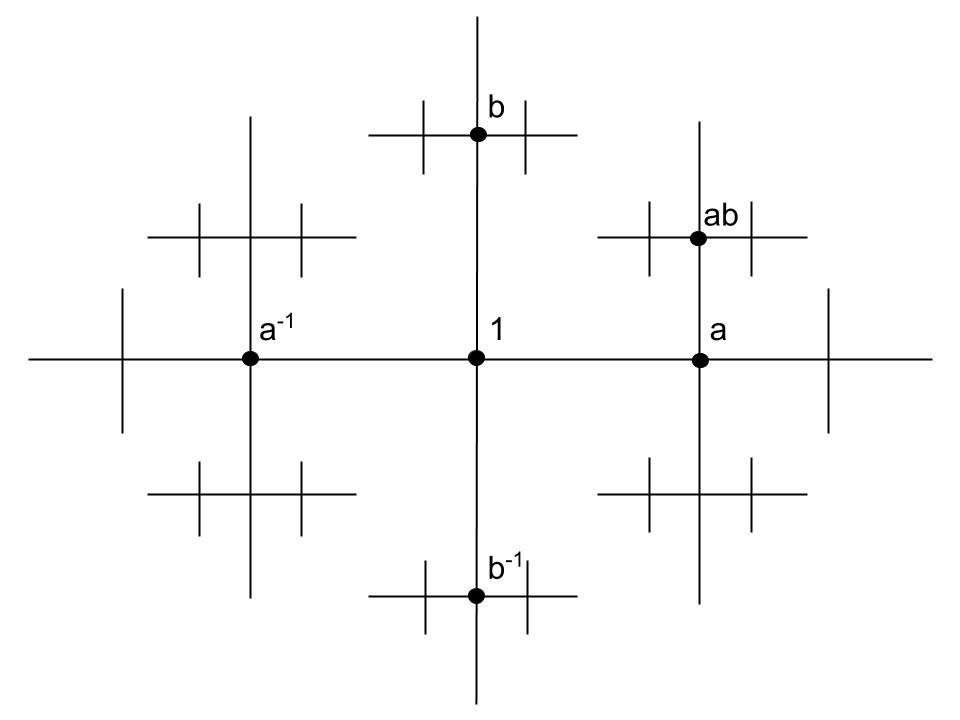}
    \caption{The Cayley graph of the group $F_2 = \langle a,b \rangle$ with respect to the generating set $S_\alpha \coloneqq \{a,a^{-1},b,b^{-1}\}$.}
    \label{F2Cayley}
\end{figure}

Now consider the generating set $S_\beta\coloneqq \{a,a^{-1},b,b^{-1},aba,a^{-1}b^{-1}a^{-1}\}$. We have the following result:

\begin{prop} \label{negative curvature example}
     With respect to the generating set $S_\beta$, we have $\kappa(g) < 0$ for all $g\in F_2-\{1\}$.
\end{prop}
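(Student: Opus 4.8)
The plan is to establish the equivalent inequality $\GenCon(g)>|g|$ for every $g\in F_2-\{1\}$, all word lengths being taken with respect to $S_\beta$; since $|S_\beta|=6$ this is $\sum_{s\in S_\beta}|s^{-1}gs|>6|g|$. The whole argument rests on understanding $S_\beta$-geodesics, so I would begin there. If $w=x_1\cdots x_m$ is the reduced word of $g$ in the free generators $\{a^{\pm1},b^{\pm1}\}$, then the only shortcuts available in $S_\beta$ come from $aba=t$ and $a^{-1}b^{-1}a^{-1}=t^{-1}$ being single letters, so I would first prove that $|g|_{S_\beta}=m-2M(g)$, where $M(g)$ is the maximum number of pairwise disjoint occurrences of $aba$ or $a^{-1}b^{-1}a^{-1}$ as consecutive subwords of $w$. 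The bound $|g|_{S_\beta}\ge m-2M(g)$ follows by ``cleaning'' any spelling of partial cancellations between a $t^{\pm1}$ and a neighbour, which never increases its length, until no $t^{\pm1}$ cancels, at which point the copies of $t^{\pm1}$ occupy disjoint blocks of $w$; the reverse bound follows because replacing a maximal disjoint family of such blocks cannot create a cancellation in a reduced word. I would also record which generators can begin or end a geodesic $S_\beta$-spelling of $g$, so that Lemmas \ref{shortening lemma} and \ref{preserving lemma} and Remark \ref{extending remark} apply when needed.

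With this dictionary, computing $|s^{-1}gs|_{S_\beta}$ for a generator $s$ reduces to bookkeeping: the reduced word of $s^{-1}gs$ is obtained from $w$ by deleting a short prefix and suffix (when free cancellation occurs) or prepending and appending a short word, after which one recounts $M$. Before grinding through this I would use the two symmetries of $(F_2,S_\beta)$ to cut down cases: the automorphism $a\mapsto a^{-1},\,b\mapsto b^{-1}$, which interchanges $t$ and $t^{-1}$, and the word-reversal anti-automorphism $x_1\cdots x_m\mapsto x_m\cdots x_1$, which fixes $t,t^{-1}$; both preserve $|g|$ and $\GenCon(g)$, hence $\kappa(g)$, so I may normalize the first letter of $w$ and interchange the roles of the two ends. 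Elements with $|g|_{S_\beta}$ below a small bound, together with the finitely many elements that a single conjugation carries into one of those, I would settle by direct computation.

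For the remaining $g$ I would organize the case analysis by the longest prefix, and the longest suffix, that $w$ shares with $aba$ and with $a^{-1}b^{-1}a^{-1}$ --- equivalently, by how much cancellation conjugation by $t$, $t^{-1}$, or a standard generator produces at each end, and whether the resulting surgery creates or destroys blocks near the ends of the word. For each configuration one tabulates $|s^{-1}gs|_{S_\beta}$ for all six $s$ and sums. In the fully generic case there is no cancellation anywhere: each standard generator adds $2$ to the $S_\alpha$-length and creates no new block, and each of $t^{\pm1}$ appends a fresh block on either side, so it too adds $2$ to the $S_\beta$-length, for a total of $6|g|+12$. The content of the proof is to check that in every other configuration the six contributions still sum to more than $6|g|$: whenever one conjugator shortens $g$ --- which, since every generator has $S_\beta$-length $1$, can only be by $1$ or $2$ --- the cancellation pattern that produced the shortening forces enough of the other five conjugators to lengthen $g$ by a compensating amount.

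The main obstacle is exactly this balancing in the non-generic cases. Conjugation by $t^{\pm1}$ can delete as many as six letters of $w$ at once, and the effect on the packing number $M$ is not monotone --- deleting a letter can destroy one block while creating or merging others --- so a careless bound on a single term is far too lossy. The delicate situations are the ``cyclically reducible'' elements, where $w$ begins and ends with interlocking pieces of $t^{\pm1}$ (for instance $g=t$ or $g=at^na^{-1}$): there a genuine shortening by $2$ does occur, and one must exhibit the offsetting increases among the other conjugators explicitly. The remaining cases, though numerous, are mechanical once the length formula and the six surgeries are in hand.
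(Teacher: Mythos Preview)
Your plan is viable but takes a genuinely different route from the paper. The paper never writes down an explicit length formula for $|g|_{S_\beta}$. Instead, after the same parity observation (the relation $abaw^{-1}=1$ has even length, so Lemma~\ref{preserving lemma} rules out $|sg|=|g|$ and $|gs|=|g|$), it partitions $F_2\setminus\{1\}$ into the four sets $A_a,A_{a^{-1}},A_b,A_{b^{-1}}$ according to the first letter of the reduced $S_\alpha$-word and proves, by a short Cayley-graph argument, that the first letter of any $S_\beta$-geodesic for $g$ is constrained to at most two specific generators depending only on which $A_s$ contains $g$ (for instance, elements of $A_a$ must begin with $a$ or $w$; elements of $A_b$ with $b$ or $a^{-1}$). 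By Lemma~\ref{shortening lemma} this bounds the number of left-shortening generators by two, and symmetrically on the right. The final step is that, apart from a short explicit list ($ba$, $b^{-1}a^{-1}$, and the generators), right-multiplying $g$ by any $s'\in S_\beta$ keeps it in the same $A_{s_0}$; hence the same two candidate left-shorteners apply to every $gs'$, and splitting $|g|-|s^{-1}gs|=(|g|-|gs|)+(|gs|-|s^{-1}gs|)$ yields $\kappa(g)\le-\tfrac{2}{3|g|}$ in one line.

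What your approach buys is an explicit combinatorial model: the packing number $M(g)$ would let you compute each conjugate length exactly, not merely bound the sum. What the paper's approach buys is that it eliminates precisely the obstacle you flag as the hard part. By reasoning only about \emph{which generators can occur as the first or last letter of a geodesic}, and observing that this is governed by the $A_s$-class---which is stable under multiplication by a single generator---the paper sidesteps your end-configuration case analysis entirely, needs no length formula, and never has to track how surgery on the reduced word perturbs $M$. Your route would get there, but the structural shortcut via the $A_s$ partition is the idea that collapses the ``numerous mechanical cases'' you anticipate into a two-line estimate.
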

\begin{proof}
    Let $w = aba$ so that $w^{-1} = a^{-1}b^{-1}a^{-1}$. With respect to the generators in $S_\beta$, $F_2$ has the presentation $\langle a,b,w|abaw^{-1}=1\rangle$. Suppose that we have $g_1\ldots g_n=h_1\ldots h_{n+1}$ for some generators $g_1,\ldots,g_n,h_1,\ldots,h_{n+1}\in S_\beta$. Then $h_{n+1}^{-1}\ldots h_1^{-1}g_1\ldots g_n=1$, so this word with $2n+1$ letters can be reduced to the identity by applying free reductions and removing all occurrences of any inverse or cyclic permutation of $abaw^{-1}$. Because all of these operations change the number of letters by an even number and $2n+1$ is odd, we have a contradiction. Therefore, given any word of generators in $S_\beta$, there does not exist a word with exactly one more generator that represents the same group element in $F_2$. By Lemma \ref{preserving lemma}, this implies that there do not exist generators $s\in S_\beta$ or group elements $g\in F_2-\{1\}$ such that $|sg|=|g|$ or $|gs|=|g|$.

    For $s\in S_\alpha = \{a,a^{-1},b,b^{-1}\}$, let $A_s\subset F_2$ be the set of all $g\in F_2$ such that $g$ begins with $s$ in its geodesic spelling with respect to $S_\alpha$. Every non-identity element of $F_2$ belongs to exactly one of the four sets $A_a$, $A_{a^{-1}}$, $A_b$, or $A_{b^{-1}}$, which each correspond with one of the four primary branches stemming from the point $1$ in Figure \ref{F2Cayley}. We claim that, for all $g\in A_a$, every geodesic spelling of $g$ with respect to $S_\beta$ must begin with either $a$ or $w$. Note that $a$ and $w$ are the only two generators in $S_\beta$ that begin with $a$ when spelled with respect to $S_\alpha$ and thus lie in $A_a$. Suppose that there exists $g\in A_a$ with a geodesic spelling $g= g_1\ldots g_n$ such that $g_1\neq a$ and $g_1\neq w$. Then for some $1\leq k<n$ we have $g_1\ldots g_k\notin A_a\cup \{1\}$ and $(g_1\ldots g_k)g_{k+1}\in A_a$. The only non-identity point that is not in $A_a$ for which right multiplication by a generator gives a point in $A_a$ is $b^{-1}a^{-1}$, for which we have $(b^{-1}a^{-1})w=a$. Therefore, we must have $(g_1\ldots g_k)g_{k+1}=a$, contradicting our claim that the spelling $g_1\ldots g_n$ was geodesic. Hence, the first letter of a geodesic spelling of any element of $A_a$ must be either $a$ or $w$ as desired. An identical argument shows that a geodesic spelling of an element of $A_{a^{-1}}$ must begin with either $a^{-1}$ or $w^{-1}$.
    
    We now claim that every geodesic spelling of an element of $A_b$ must begin with either $b$ or $a^{-1}$. Suppose that $g\in A_b$ has a geodesic spelling $g= g_1\ldots g_n$ such that $g_1\neq b$. Then again there exists $1\leq k<n$ such that $g_1\ldots g_k\notin A_b\cup\{1\}$ and $(g_1\ldots g_k)g_{k+1}\in A_b$. The only non-identity point that is not in $A_b$ for which right multiplication by a generator gives a point in $A_b$ is $a^{-1}$, for which we have $a^{-1}w = ba$. Therefore, $g_1\ldots g_k=a^{-1}$. Because the spelling $g_1\ldots g_n$ is geodesic, we must have $k=1$ and $g_1=a^{-1}$. Hence, the first letter of a geodesic spelling of an element of $A_b$ must be either $b$ or $a^{-1}$. An identical argument shows that a geodesic spelling of an element of $A_{b^{-1}}$ must begin with either $b^{-1}$ or $a$.
    
    We thus conclude by Lemma \ref{shortening lemma} that, for all $g\in F_2-\{1\}$, there exist at most two distinct $s\in S_\beta$ such that $|sg|=|g|-1$. By Remark \ref{extending remark} we must have $|sg|=|g|+1$ for all other $s$. By applying the results of the previous paragraphs to the inverses of group elements, we also get that there exist at most two distinct $s\in S_\beta$ such that $|gs|=|g|-1$.
    
    Direct computation shows that $\kappa(g) < 0$ for $g=ba$ and $g=b^{-1}a^{-1}$, and whenever $g$ is a generator in $S_\beta$. For all other $g\in F_2-\{1\}$, note that $g$ is in $A_{s_0}$ for some $s_0\in S_\alpha=\{a,a^{-1},b,b^{-1}\}$ if and only if $gs'$ is also in $A_{s_0}$ for all generators $s'\in S_\beta$. Hence, the two distinct $s\in S_\beta$ for which it is possible to have $|sg|=|g|-1$ are the same as the two distinct $s\in S_\beta$ for which it is possible to have $|sgs'|=|gs'|-1$. Therefore,
    \begin{align*}
        \kappa(g) &= \frac{|g|-\GenCon(g)}{|g|} = \frac{|g|-\frac{1}{6}\sum_{s\in S_\beta}|s^{-1}gs|}{|g|} \\
        &= \frac{\frac{1}{6}\sum_{s\in S_\beta}(|g|-|gs|) + \frac{1}{6}\sum_{s\in S_\beta}(|gs|-|s^{-1}gs|)}{|g|}\\ &\leq \frac{\frac{1}{6}(-2)+\frac{1}{6}(-2)}{|g|} = -\frac{2}{3|g|} < 0.
    \end{align*}
\end{proof}

\subsection{Zero Curvature with Positive Density in Free Group}

The generating set $S_\beta$ from Proposition \ref{negative curvature example} did not allow for points of non-negative curvature because different geodesic words for the same group element could have no more than two different starting letters. In order to prove Theorem \ref{intro zero curvature dense}, we therefore seek a six-element generating set which allows for different geodesic words for the same group element to begin with three different generators. Let $S_\gamma \coloneqq \{a,a^{-1},b,b^{-1},ababa,a^{-1}b^{-1}a^{-1}b^{-1}a^{-1}\}$. We define the word $w=ababa$ so that $w^{-1} = a^{-1}b^{-1}a^{-1}b^{-1}a^{-1}$, and we split $S_\gamma$ into the two subsets $S_\gamma^1 \coloneqq \{a,b^{-1},w\}$ and $S_\gamma^2 \coloneqq \{a^{-1},b,w^{-1}\}$. Then we have the following useful results:

\begin{lem} \label{first letter lemma}
    Let $h_1\ldots h_n$ and $h_1'\ldots h_n'$ be geodesic words spelling the same element of $F_2$. Then $h_1\in S_\gamma^1$ if and only if $h_1'\in S_\gamma^1$. Similarly, $h_n\in S_\gamma^1$ if and only if $h_n'\in S_\gamma^1$.
\end{lem}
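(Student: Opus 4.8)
The plan is to show that whether a geodesic $S_\gamma$-spelling of $g$ begins with a generator in $S_\gamma^1$ or with one in $S_\gamma^2$ is determined by which ``branch'' of $F_2$ with respect to the free generating set $S_\alpha=\{a,a^{-1},b,b^{-1}\}$ contains $g$. For $s\in S_\alpha$ let $A_s$ denote the set of $g\in F_2$ whose unique geodesic $S_\alpha$-spelling begins with $s$, so that $F_2-\{1\}=A_a\sqcup A_{a^{-1}}\sqcup A_b\sqcup A_{b^{-1}}$. I will prove that every geodesic $S_\gamma$-spelling of an element of $A_a\cup A_{b^{-1}}$ begins with a generator in $S_\gamma^1$, while every geodesic $S_\gamma$-spelling of an element of $A_{a^{-1}}\cup A_b$ begins with a generator in $S_\gamma^2$. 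Since the branch containing $g$ depends only on $g$, this gives the first assertion of the lemma. The statement about last letters then follows by applying the first assertion to $g^{-1}$: the word $h_1\ldots h_n$ is a geodesic spelling of $g$ if and only if $h_n^{-1}\ldots h_1^{-1}$ is a geodesic spelling of $g^{-1}$, and inversion interchanges $S_\gamma^1$ and $S_\gamma^2$, so the type of $h_n$ is determined by $g^{-1}$ and hence by $g$.

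For the branch statement I will argue as in the proof of Proposition \ref{negative curvature example}. Consider $A_a$; the only generators of $S_\gamma$ lying in $A_a$ are $a$ and $w=ababa$, both in $S_\gamma^1$. Suppose $g\in A_a$ has a geodesic spelling $g_1\ldots g_n$ with $g_1\notin\{a,w\}$; then $g_1\notin A_a$, and since no positive-length prefix of a geodesic word represents $1$, there is a least index $k$ with $p\coloneqq g_1\ldots g_k\notin A_a\cup\{1\}$ and $pg_{k+1}\in A_a$. The heart of the argument is to classify all pairs $(p,s)$ with $p\notin A_a\cup\{1\}$, $s\in S_\gamma$, and $ps\in A_a$: a short case analysis on $s$---using that right multiplication by a length-one generator cannot carry a nontrivial point into a branch it is not already in, together with a direct free-cancellation computation for $s=w^{\pm1}$---shows that the only such pairs are $(b^{-1}a^{-1},w)$, with $b^{-1}a^{-1}w=aba$, and $(b^{-1}a^{-1}b^{-1}a^{-1},w)$, with $b^{-1}a^{-1}b^{-1}a^{-1}w=a$. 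In the second case $b^{-1}a^{-1}b^{-1}a^{-1}=aw^{-1}$ has $S_\gamma$-length $2$, so $k=2$, but then $g_1g_2g_3=(b^{-1}a^{-1}b^{-1}a^{-1})w=a$ would be spelled with three letters although $|a|_{S_\gamma}=1$, contradicting geodesicity. In the first case $b^{-1}a^{-1}$ has $S_\gamma$-length $2$, so $k=2$ and $g_1g_2=b^{-1}a^{-1}$, whose only length-two spelling is $(b^{-1},a^{-1})$, giving $g_1=b^{-1}\in S_\gamma^1$. Thus $g_1\in S_\gamma^1$ in every case. The claim for $A_{a^{-1}}$ follows from the automorphism $\sigma$ of $F_2$ sending $a\mapsto a^{-1}$ and $b\mapsto b^{-1}$, which permutes $S_\gamma$ (hence is an $S_\gamma$-isometry) and interchanges $w\leftrightarrow w^{-1}$, $S_\gamma^1\leftrightarrow S_\gamma^2$, and $A_a\leftrightarrow A_{a^{-1}}$. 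Once the parallel claim for $A_b$ is established---the only generator of $S_\gamma$ in $A_b$ is $b\in S_\gamma^2$, and the analogous crossing classification yields the pairs $(a^{-1},w)\mapsto baba$ and $(a^{-1}b^{-1}a^{-1},w)\mapsto ba$, the second excluded from geodesic words as before (here $|a^{-1}b^{-1}a^{-1}|_{S_\gamma}=3$) and the first forcing $g_1=a^{-1}\in S_\gamma^2$---the claim for $A_{b^{-1}}$ follows by applying $\sigma$ to the $A_b$ statement.

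I expect the main obstacle to be the crossing classification for $s=w^{\pm1}$, which requires careful bookkeeping of free cancellation between a reduced word $p$ and the length-five word $ababa$ (or its inverse) to determine, over all possible amounts of cancellation, exactly when the product re-enters a prescribed branch while $p$ itself does not. The observation that makes this manageable is that if fewer than five letters cancel, then the product ends in a nonempty suffix of $w$ and begins with the first letter of $p$, so membership of the product in the branch forces $|p|$ to equal the number of cancelled letters---that is, $p$ must be the inverse of a proper prefix of $w$, leaving only four explicit words to test---whereas the ``total cancellation'' case reduces to $p=p_0w^{\mp1}$ with $p_0$ already in the branch. The remaining subtlety is passing from the existence of a crossing pair to the conclusion about $g_1$: this uses that no prefix of a geodesic word can itself be spelled more efficiently, both to pin down $k$ and the spelling of $p$ and to discard the deeper crossings (such as $(b^{-1}a^{-1}b^{-1}a^{-1},w)$), which would otherwise produce a prefix whose stated spelling exceeds its $S_\gamma$-length.
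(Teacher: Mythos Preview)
Your approach is essentially the same as the paper's: partition $F_2-\{1\}$ into the four branches $A_s$, and for each branch run the ``first crossing into the branch'' argument to pin down the possible first letters of a geodesic $S_\gamma$-spelling, reducing to exactly the same crossing pairs $(b^{-1}a^{-1},w)$, $(b^{-1}a^{-1}b^{-1}a^{-1},w)$ for $A_a$ and $(a^{-1},w)$, $(a^{-1}b^{-1}a^{-1},w)$ for $A_b$, with the deeper pair discarded by geodesicity in each case. The only real difference is cosmetic: you invoke the involution $\sigma\colon a\mapsto a^{-1},\,b\mapsto b^{-1}$ to transfer the $A_a$ and $A_b$ cases to $A_{a^{-1}}$ and $A_{b^{-1}}$, whereas the paper simply says ``an identical argument shows\ldots''.
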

\begin{proof}
    Let the sets $A_a$, $A_{a^{-1}}$, $A_b$, and $A_{b^{-1}}$ be as in the proof of Proposition \ref{negative curvature example}, and recall that every non-identity element of $F_2$ must lie in exactly one of these four sets. Let $g\in A_a$, and we claim that every geodesic spelling of $g$ begins with an element of $S_\gamma^1$. Let $g_1\ldots g_n$ be a geodesic spelling of $g$ such that $g_1\neq a$ and $g_1\neq w$. Then there must exist $1\leq k<n$ such that $g_1\ldots g_k\notin A_a\cup \{1\}$ and $(g_1\ldots g_k)g_{k+1}\in A_a$. The only non-identity elements that are not in $A_a$ for which right multiplication by a generator gives elements of $A_a$ are $b^{-1}a^{-1}$ and $b^{-1}a^{-1}b^{-1}a^{-1}$. For $b^{-1}a^{-1}$, we have $(b^{-1}a^{-1})w=aba$, while for $b^{-1}a^{-1}b^{-1}a^{-1}$, we have $(b^{-1}a^{-1}b^{-1}a^{-1})w=a$. In this second case, though, note that if $g_1\ldots g_k=b^{-1}a^{-1}b^{-1}a^{-1}=aw^{-1}$ and $g_{k+1}=w$, then the spelling $g_1\ldots g_n$ would not have been geodesic and we would have a contradiction. Therefore, the only way we could have $g_1\ldots g_n\in A_a$ and $g_1\notin A_a$ is if $g_1\ldots g_k=b^{-1}a^{-1}$ and $g_{k+1}=w$. The word $b^{-1}a^{-1}$ is the only geodesic spelling of itself, so we must in particular have $g_1=b^{-1}$. We can conclude that either $g_1\in A_a$, in which case it is either $a$ or $w$, or $g_1=b^{-1}$. Therefore, $g_1\in S_\gamma^1$ as desired. An identical argument shows that, if $g\in A_{a^{-1}}$, then every geodesic spelling of $g$ must begin with an element of $S_\gamma^2$.
    
    We now claim that, if $g\in A_b$, then every geodesic spelling of $g$ begins with an element $S_\gamma^2$. Let $g_1\ldots g_n$ be a geodesic spelling of $g$ such that $g_1\neq b$. Then there must exist $1\leq k<n$ such that $g_1\ldots g_k\notin A_b\cup \{1\}$ and $(g_1\ldots g_k)g_{k+1}\in A_b$. The only non-identity elements not in $A_b$ for which right multiplication by a generator gives elements of $A_b$ are $a^{-1}$ and $a^{-1}b^{-1}a^{-1}$. For $a^{-1}$, we have $a^{-1}w=baba$, while for $a^{-1}b^{-1}a^{-1}$, we have $(a^{-1}b^{-1}a^{-1})w=ba$. In this second case, we would have $g_1\ldots g_k=a^{-1}b^{-1}a^{-1}=baw^{-1}$ is a geodesic spelling, and $g_{k+1}=w$, so the spelling $g_1\ldots g_n$ is not geodesic and we would have a contradiction. This leaves the case where $g_1\ldots g_k=a^{-1}$, and because this spelling is geodesic we must have $k=1$ and $g_1=a^{-1}$. Therefore, a geodesic spelling of $g\in A_b$ must begin with either $a^{-1}$ or $b$, both of which are elements of $S_\gamma^2$. An identical argument shows that, if $g\in A_{b^{-1}}$, then every geodesic spelling of $g$ must begin with an element of $S_\gamma^1$, either $b^{-1}$ or $a$.
    
    Now suppose we have two geodesic words $h_1\ldots h_n$ and $h_1'\ldots h_n'$ such that $h_1\ldots h_n= h_1'\ldots h_n' \coloneqq h$. If $h_1\in S_\gamma^1$, then the results of the previous paragraphs tell us that $h\in A_a$ or $A_{b^{-1}}$. Therefore, we must also have $h_1'\in S_\gamma^1$. If $h_1\notin S_\gamma^1$, then $h_1\in S_\gamma^2$, so $h\in A_{a^{-1}}$ or $A_b$ and $h_1'\in S_\gamma^2$. Finally, applying these results to $h^{-1}$ gives the desired result for $h_n$ and $h_n'$.
\end{proof}

\begin{lem} \label{one letter offset}
    With respect to the generating set $S_\gamma$, there do not exist generators $s\in S_\gamma$ or group elements $g\in F_2-\{1\}$ such that $|sg|=|g|$ or $|gs|=|g|$.
\end{lem}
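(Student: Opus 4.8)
The plan is to establish a parity invariant and then quote Lemma~\ref{preserving lemma}, exactly as in the treatment of $S_\alpha$ and $S_\beta$. Define the homomorphism $\phi \colon F_2 \to \Z/2\Z$ by $\phi(a) = \phi(b) = 1$. Every generator $s \in S_\gamma$ satisfies $\phi(s) = 1$: this is immediate for $a^{\pm 1}$ and $b^{\pm 1}$, and $\phi(ababa) = 5 \equiv 1$ and $\phi(a^{-1}b^{-1}a^{-1}b^{-1}a^{-1}) = -5 \equiv 1 \pmod 2$. Hence any word $s_1 \cdots s_m$ of generators in $S_\gamma$ representing $g \in F_2$ has $m \equiv \phi(g) \pmod 2$, so all $S_\gamma$-spellings of a fixed group element have lengths of the same parity. (Equivalently, with respect to $S_\gamma$ the group $F_2$ has the one-relator presentation $\langle a,b,w \mid ababaw^{-1} = 1\rangle$ whose relator has even length $6$, and the parity argument from the proof of Proposition~\ref{negative curvature example} applies just as it did there.)

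With this in hand I would argue by contradiction. Suppose $|sg| = |g| =: n$ for some $s \in S_\gamma$ and $g \in F_2 - \{1\}$. By Lemma~\ref{preserving lemma} there is an $S_\gamma$-spelling of $g$ of length $n + 1$, namely $s^{-1} g_2 \cdots g_{n+1}$ with $g_2 \cdots g_{n+1}$ geodesic; but $g$ also has a geodesic $S_\gamma$-spelling, of length $n$. These two spellings of the same element have lengths of opposite parity, contradicting the invariant above. Therefore $|sg| \neq |g|$ for every $s$ and every $g \neq 1$. The case $|gs| = |g|$ is handled identically using the second half of Lemma~\ref{preserving lemma}, or by applying the case just proved to $g^{-1}$ and $s^{-1}$ together with $|g| = |g^{-1}|$.

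There is essentially no obstacle here: the argument is structurally the same as the ones already given for $S_\alpha$ and $S_\beta$, and the only point that genuinely requires checking is the numerical fact that $\phi$ sends all six generators of $S_\gamma$ to $1 \in \Z/2\Z$ — equivalently, that $ababa$ has odd length as a word in $\{a,a^{-1},b,b^{-1}\}$ — which is precisely what guarantees that the parity obstruction survives the enlargement of the generating set.
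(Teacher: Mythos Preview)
Your proof is correct and essentially the same as the paper's: both establish that all $S_\gamma$-spellings of a fixed element have lengths of the same parity and then invoke Lemma~\ref{preserving lemma} for the contradiction. The paper phrases the parity invariant combinatorially (the relator $ababaw^{-1}$ and free reductions change word length by an even amount), while your homomorphism $\phi$ packages the same observation more cleanly---indeed you note this equivalence yourself.
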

\begin{proof}
    With respect to $S_\gamma$, $F_2$ has the presentation $\langle a,b,w|ababaw^{-1}=1\rangle$. By Lemma \ref{preserving lemma}, it suffices to show that no word of these generators has exactly one more letter than a geodesic word. Suppose, on the other hand, that we have $g_1\ldots g_n=h_1\ldots h_{n+1}$ for some generators $g_1,\ldots,g_n,h_1,\ldots,h_{n+1}\in S_\gamma$, where the word $g_1\ldots g_n$ is geodesic. Then $h_{n+1}^{-1}\ldots h_1^{-1}g_1\ldots g_n=1$, so this word with $2n+1$ letters can be reduced to the identity by applying free reductions and removing all occurrences of any inverse or cyclic permutation of $ababaw^{-1}$. Because all of these operations change the number of letters by an even number and $2n+1$ is odd, we have a contradiction.
\end{proof}

Lemma \ref{one letter offset} immediately implies the following when considered with Lemma \ref{shortening lemma} and Remark \ref{extending remark}.
\begin{cor} \label{extending cor}
    With respect to the generating set $S_\gamma$, we must have $|sg| = |g|-1$ or $|sg| = |g|+1$ for all $g\in F_2-\{1\}$ and $s\in S_\gamma$. Moreover, we have $|sg|= |g|-1$ if and only if there exists a geodesic spelling of $g$ beginning with $s^{-1}$. Similarly, we must have $|gs| = |g|-1$ or $|gs|=|g|+1$, and $|gs|=|g|-1$ if and only if there exists a geodesic spelling of $g$ ending with $s^{-1}$.\qed
\end{cor}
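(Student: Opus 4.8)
The plan is to obtain this as an immediate consequence of the three cited results, introducing no new argument. First I would record that every element of $S_\gamma$ has word length $1$ with respect to $S_\gamma$ — this includes the ``long'' generators $w = ababa$ and $w^{-1}$, since each of them is itself a single generator in this generating set. Hence the triangle inequality for the word metric gives $|g| - 1 \leq |sg| \leq |g| + 1$ and likewise $|g| - 1 \leq |gs| \leq |g| + 1$ for every $g \in F_2 - \{1\}$ and $s \in S_\gamma$, exactly as spelled out in Remark \ref{extending remark}. So each of $|sg|$ and $|gs|$ takes one of the three values $|g| - 1$, $|g|$, or $|g| + 1$.

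Next I would invoke Lemma \ref{one letter offset}, which states precisely that there are no generators $s \in S_\gamma$ and no nontrivial $g \in F_2$ with $|sg| = |g|$ or $|gs| = |g|$. Deleting this middle value from the three-element list of the previous step leaves $|sg| \in \{|g|-1,\,|g|+1\}$ and $|gs| \in \{|g|-1,\,|g|+1\}$, which is the first assertion of the corollary.

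Finally, for the characterization of the length-decreasing case, I would quote Lemma \ref{shortening lemma} directly: $|sg| = |g| - 1$ holds if and only if $g$ admits a geodesic spelling with respect to $S_\gamma$ beginning with $s^{-1}$, and $|gs| = |g| - 1$ holds if and only if $g$ admits a geodesic spelling ending with $s^{-1}$. Combining this with the dichotomy just established yields the full statement.

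There is essentially no obstacle here: the substantive content is already contained in Lemma \ref{one letter offset}, whose proof is the parity argument on the relator $ababaw^{-1}$. The only point that merits a moment's attention is verifying that the triangle-inequality bound of Remark \ref{extending remark} genuinely applies to the long generators $w$ and $w^{-1}$, and this is immediate because they too have word length $1$ in the metric induced by $S_\gamma$. Accordingly I would keep the proof to a single sentence, as the \qed in the statement indicates.
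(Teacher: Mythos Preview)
Your proposal is correct and matches the paper's approach exactly: the paper states that the corollary follows immediately from Lemma \ref{one letter offset} together with Lemma \ref{shortening lemma} and Remark \ref{extending remark}, and gives no further argument beyond the \qed. Your breakdown into the triangle-inequality trichotomy, the elimination of the middle case via Lemma \ref{one letter offset}, and the invocation of Lemma \ref{shortening lemma} for the length-decreasing characterization is precisely the intended reasoning.
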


We have the following observations that use similar ideas to the proof of Lemma \ref{first letter lemma}:

\begin{lem}\label{double a}
    Suppose that, for some $g\in F_2-\{1\}$, there exists a geodesic spelling of $g$ of the form $aag_1\ldots g_n$ with $g_1,\ldots,g_n\in S_\gamma$. Then there exists no geodesic spelling of $g$ of the form $b^{-1}h_1\ldots h_{n+1}$ with $h_1,\ldots,h_{n+1}\in S_\gamma$. Similarly, if a geodesic spelling of some $g'\in F_2-\{1\}$ is of the form $a^{-1}a^{-1}g_1'\ldots g_n'$, then there exists no geodesic spelling of $g'$ of the form $bh_1'\ldots h_{n+1}'$.
\end{lem}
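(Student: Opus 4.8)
I will prove the first assertion; the second follows from it by applying the automorphism $\varphi$ of $F_2$ determined by $a\mapsto a^{-1}$, $b\mapsto b^{-1}$. Since $\varphi(w)=\varphi(ababa)=a^{-1}b^{-1}a^{-1}b^{-1}a^{-1}=w^{-1}$, the map $\varphi$ permutes $S_\gamma$ and interchanges $S_\gamma^1$ with $S_\gamma^2$; hence it is an isometry of the Cayley graph, carries geodesic spellings to geodesic spellings, and exchanges the hypothesis and conclusion of the two assertions.

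Suppose, for contradiction, that some $g\in F_2-\{1\}$ has a geodesic spelling $aag_1\ldots g_n$ and also a geodesic spelling $b^{-1}h_1\ldots h_{n+1}$; both words have length $n+2$, so $|g|=n+2$. The plan is to extract from these two spellings two geodesic spellings of the single element $a^{-1}g$ whose first letters lie in different halves of $S_\gamma$, contradicting Lemma \ref{first letter lemma}. From the first spelling, $a^{-1}g=ag_1\ldots g_n$, a word of length $n+1$; since $|a^{-1}g|\ge|g|-1=n+1$ by the triangle inequality, this word is geodesic, and it begins with $a\in S_\gamma^1$. By Lemma \ref{first letter lemma} every geodesic spelling of $a^{-1}g$ then begins in $S_\gamma^1$, and by the identification of starting letters with the four sets $A_a,A_{a^{-1}},A_b,A_{b^{-1}}$ carried out in the proof of Lemma \ref{first letter lemma}, this forces $a^{-1}g\in A_a\cup A_{b^{-1}}$; prepending the letter $a$ to the $S_\alpha$-geodesic word of $a^{-1}g$ (which begins with $a$ or with $b^{-1}$) produces a reduced word beginning with $a$, so $g\in A_a$.

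I now treat the second spelling exactly as in the proof of Lemma \ref{first letter lemma}. Since $b^{-1}\neq a,w$ and $g\in A_a$, tracking the partial products $b^{-1},\,b^{-1}h_1,\,b^{-1}h_1h_2,\dots$ up to the first one lying in $A_a$ shows that this partial product is one of the two boundary elements $b^{-1}a^{-1}$ or $b^{-1}a^{-1}b^{-1}a^{-1}$, with the next letter of the spelling equal to $w$; the second boundary element equals $aw^{-1}$, whose unique geodesic spelling begins with $a$ rather than $b^{-1}$, and $b^{-1}a^{-1}$ has the single geodesic spelling $b^{-1}\cdot a^{-1}$, so $h_1=a^{-1}$, $h_2=w$, and $g=b^{-1}a^{-1}w\,h_3\cdots h_{n+1}$ (necessarily $n\ge1$, since a geodesic spelling of an element of $A_a$ beginning with $b^{-1}$ has length at least three). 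Using $b^{-1}a^{-1}w=b^{-1}a^{-1}ababa=aba$ and hence $a^{-1}b^{-1}a^{-1}w=ba$, I obtain
\[
a^{-1}g=a^{-1}\cdot b^{-1}a^{-1}w\,h_3\cdots h_{n+1}=b\,a\,h_3\cdots h_{n+1},
\]
a word in $S_\gamma$ of length $2+(n-1)=n+1=|a^{-1}g|$, hence a geodesic spelling of $a^{-1}g$ beginning with $b\in S_\gamma^2$. Together with the geodesic spelling $ag_1\ldots g_n$ beginning with $a\in S_\gamma^1$, this contradicts Lemma \ref{first letter lemma}, completing the proof.

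The triangle-inequality bookkeeping and the two short word computations are routine. The step that needs genuine care is the appeal to the analysis of Lemma \ref{first letter lemma} in the third paragraph: I must confirm that no partial product of a geodesic spelling is the identity, and that a geodesic $b^{-1}$-spelling of an element of $A_a$ can re-enter $A_a$ only through the partial product $b^{-1}a^{-1}$ followed by $w$ — in particular that the competing boundary element $b^{-1}a^{-1}b^{-1}a^{-1}=aw^{-1}$ never occurs as a $b^{-1}$-initial geodesic prefix. This is precisely the finite check already performed for $S_\gamma$ in the proof of Lemma \ref{first letter lemma}, so reusing it is legitimate, but it is the point I would write out in full.
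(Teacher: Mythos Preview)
Your proof is correct and follows essentially the same approach as the paper's: both arguments establish that $g\in A_a$, then invoke the analysis from Lemma~\ref{first letter lemma} to force the $b^{-1}$-spelling to begin $b^{-1}a^{-1}w$, and finally obtain the two geodesic spellings $ag_1\ldots g_n$ and $bah_3\ldots h_{n+1}$ of $a^{-1}g$ whose first letters lie in $S_\gamma^1$ and $S_\gamma^2$ respectively, contradicting Lemma~\ref{first letter lemma}. The paper's derivation of $g\in A_a$ is phrased slightly differently (it notes directly that $aa\in A_a$ rules out $g\in A_{b^{-1}}$), and it handles the second assertion by saying ``the argument for $g'$ is identical'' rather than invoking your automorphism $\varphi$, but these are cosmetic differences.
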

\begin{proof}
    In the proof of Lemma \ref{first letter lemma}, we saw that a geodesic word can only begin with the letter $a$ if it spells a group element that lies in either $A_a$ or $A_{b^{-1}}$. We cannot have $g\in A_{b^{-1}}$ in this case because this would imply that the product of the first two letters of a geodesic spelling of $g$ must lie in $A_{b^{-1}}$, but we have $aa\in A_a$. Thus, $g\in A_a$. If a geodesic spelling $b^{-1}h_1\ldots h_{n+1}$ of $g$ exists, then we must have $h_1=a^{-1}$ and $h_2=w$ because we saw in the proof of Lemma \ref{first letter lemma} that all geodesic words in $A_a$ that begin with $b^{-1}$ must begin with the prefix $b^{-1}a^{-1}w$. Therefore, because $b^{-1}a^{-1}w = aba$, we have
    \begin{gather*}
        aag_1\ldots g_n=g=b^{-1}a^{-1}wh_3\ldots h_{n+1}=abah_3\ldots h_{n+1} \implies\\ ag_1\ldots g_n = bah_3\ldots h_{n+1},
    \end{gather*}
    which contradicts Lemma \ref{first letter lemma} because we have equal geodesic words beginning with $a\in S_\gamma^1$ and $b\notin S_\gamma^1$. The argument for $g'$ is identical.
\end{proof}

\begin{lem} \label{b double a}
    Let $g\in F_2-\{1\}$ such that there exists a geodesic spelling of $g$ of the form $baag_1\ldots g_n$ with $g_1,\ldots,g_n\in S_\gamma$. Then there exists no geodesic spelling of $g$ of the form $a^{-1}h_1\ldots h_{n+2}$ with $h_1,\ldots,h_{n+2}\in S_\gamma$. Similarly, if $g'\in  F_2-\{1\}$ has a geodesic spelling of the form $b^{-1}a^{-1}a^{-1}g_1'\ldots g_n'$, then there exists no geodesic spelling of $g'$ of the form $ah_1'\ldots h_{n+2}'$.
\end{lem}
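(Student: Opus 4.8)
The plan is to follow the template of the proof of Lemma~\ref{double a}: reduce to a situation in which two geodesic spellings of a single element begin with letters on opposite sides of the partition $S_\gamma = S_\gamma^1 \sqcup S_\gamma^2$, contradicting Lemma~\ref{first letter lemma}. The new feature, compared with Lemma~\ref{double a}, is that the relevant rewriting $a^{-1}w = baba$ is not length-preserving (two generators versus four), so one cannot simply substitute and peel off a common prefix; one must instead track a few short products carefully. It suffices to prove the first statement, as the second is its image under the automorphism $a\mapsto a^{-1}$, $b\mapsto b^{-1}$ of $F_2$, which permutes $S_\gamma$ (interchanging $w$ with $w^{-1}$, and $S_\gamma^1$ with $S_\gamma^2$) and hence preserves word length, geodesics, and the sets $A_a,\dots,A_{b^{-1}}$.

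First I would locate $g$ and pin down the start of the hypothetical other spelling. Since $baag_1\cdots g_n$ begins with $b\in S_\gamma^2$, Lemma~\ref{first letter lemma} gives $g\in A_{a^{-1}}\cup A_b$; and by the image under $a\leftrightarrow a^{-1},\,b\leftrightarrow b^{-1}$ of the fact (from the proof of Lemma~\ref{first letter lemma}) that an $A_a$-geodesic beginning with $b^{-1}$ must begin with the prefix $b^{-1}a^{-1}w$, every $A_{a^{-1}}$-geodesic beginning with $b$ must begin with $b a w^{-1}$; our spelling has third letter $a\neq w^{-1}$, so $g\notin A_{a^{-1}}$ and thus $g\in A_b$. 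Assume now, for contradiction, that $g$ also has a geodesic spelling $a^{-1}h_1\cdots h_{n+2}$. Since $g\in A_b$ and this spelling does not begin with $b$, the $A_b$-case of the proof of Lemma~\ref{first letter lemma} forces it to begin with $a^{-1}w$; hence $h_1=w$ and $g=a^{-1}wh_2\cdots h_{n+2}$.

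Next, the key reductions. From $g$'s two geodesic spellings one reads off $|b^{-1}g| = n+2$ (spelling $aag_1\cdots g_n$) and $|a^{-1}b^{-1}g| = n+1$ (spelling $ag_1\cdots g_n$); moreover $|ag|=n+2$ with geodesic spelling $wh_2\cdots h_{n+2}$, whence $p := w^{-1}ag = h_2\cdots h_{n+2}$ is geodesic of length $n+1$. A free-reduction computation gives $p = a^{-1}b^{-1}a\,g_1\cdots g_n$, and since $g\in A_b$ and $baag_1\cdots g_n$ is geodesic (so that the reduced free word of $g$ begins with $baa$ or with $bab^{-1}$), one checks that the reduced free word of $p$ begins with $a^{-1}$, i.e.\ $p\in A_{a^{-1}}$. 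By Lemma~\ref{first letter lemma} the first letter $h_2$ of the geodesic spelling $h_2\cdots h_{n+2}$ of $p$ therefore lies in $S_\gamma^2=\{a^{-1},b,w^{-1}\}$; and $h_2\neq w^{-1}$, since otherwise the $3$-letter prefix $a^{-1}ww^{-1}=a^{-1}$ of the geodesic spelling of $g$ would not be geodesic. If $h_2=b$, then (as $p\in A_{a^{-1}}$) the spelling $h_2\cdots h_{n+2}$ begins with $baw^{-1}$, so $h_3=a$; but then the $4$-letter prefix $a^{-1}wba=bababa=b\cdot w$ has word length $2$, so it is not geodesic — a contradiction. If instead $h_2=a^{-1}$, then $a^{-1}wa^{-1}=bab$, so $g=bab\,(h_3\cdots h_{n+2})$; comparing with $g=baa\,(g_1\cdots g_n)$ and left-cancelling the prefix $ba$ yields $g'' := (ba)^{-1}g$ with $g''=b\,(h_3\cdots h_{n+2})=ag_1\cdots g_n$. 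As $|g''|=|a^{-1}b^{-1}g|=n+1$, both $b,h_3,\dots,h_{n+2}$ and $a,g_1,\dots,g_n$ are geodesic spellings of $g''$, with first letters $b\in S_\gamma^2$ and $a\in S_\gamma^1$ — contradicting Lemma~\ref{first letter lemma}. This exhausts the cases and proves the first statement, and hence, by the automorphism noted above, the lemma.

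The step I expect to be the main obstacle is the verification that $p\in A_{a^{-1}}$: this requires arguing that the reduced free word of $g$ has third letter $a$ or $b^{-1}$ (never $b$), which in turn uses that excess free cancellation inside the word $baag_1\cdots g_n$ would contradict its being geodesic (a prefix of a geodesic word is geodesic, and $|baa|=3$). Apart from this bookkeeping — together with the handful of short identities $a^{-1}w=baba$, $a^{-1}wa^{-1}=bab$, $a^{-1}wba=bababa$, $a^{-1}ww^{-1}=a^{-1}$ and the length values $|bab|=3$, $|bababa|=2$, on which the non-geodesicity of the relevant prefixes rests — I do not anticipate conceptual difficulty, the overall structure being parallel to that of Lemma~\ref{double a}. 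Bipartiteness of $\mathrm{Cay}(F_2,S_\gamma)$ (Lemma~\ref{one letter offset}, i.e.\ each generator changes word length by exactly $\pm 1$) is used implicitly when ruling out the various short lengths.
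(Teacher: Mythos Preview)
Your argument is correct. After the common opening (locating $g\in A_b$ and forcing $h_1=w$), you diverge from the paper's proof. The paper left-multiplies by $a$ to obtain the equality of geodesic words $b^{-1}ag_1\cdots g_n = ah_2\cdots h_{n+2}$, reads off from the left-hand side that this element lies in $A_{b^{-1}}$, and then uses the $A_{b^{-1}}$-structure (geodesics starting with $a$ must start with $aw^{-1}$) to pin down $h_2=w^{-1}$ \emph{uniquely}; one further rewriting then yields $aag_1\cdots g_n = b^{-1}a^{-1}h_3\cdots h_{n+2}$, which contradicts Lemma~\ref{double a}. You instead look at $p=h_2\cdots h_{n+2}$ itself, argue via the reduced free word of $g$ that $p\in A_{a^{-1}}$, and then run a three-way case split on $h_2\in S_\gamma^2$, disposing of $w^{-1}$ and $b$ by exhibiting non-geodesic prefixes and of $a^{-1}$ by a direct appeal to Lemma~\ref{first letter lemma}. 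Your route is self-contained in that it never invokes Lemma~\ref{double a}, at the cost of the extra casework and the free-word verification that $p\in A_{a^{-1}}$; the paper's route is slicker because the $A_{b^{-1}}$-structure determines $h_2$ without cases and the conclusion is packaged into the already-proved Lemma~\ref{double a}. The ``main obstacle'' you flag (showing $p\in A_{a^{-1}}$) is genuine but your sketch handles it: the third free letter of $g$ is governed by the geodesic subword $ag_1\cdots g_n\in A_a\cup A_{b^{-1}}$, and the short cases $n\in\{0,1\}$ cause no trouble since no geodesic of length $\le 2$ in $A_{a^{-1}}$ begins with $b$.
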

\begin{proof}
    Because $aag_1\ldots g_n=b^{-1}g$ is in $A_a$, we must have that $baag_1\ldots g_n = g$ is in $A_b$. Therefore, if a geodesic spelling $a^{-1}h_1\ldots h_{n+2}$ of $g$ exists, then we must have $h_1=w$ because we saw in the proof of Lemma \ref{first letter lemma} that all geodesic words in $A_b$ that begin with $a^{-1}$ must begin with the prefix $a^{-1}w$. Because $aba = wa^{-1}b^{-1}$, we have
    \begin{align*}
        &baag_1\ldots g_n = g = a^{-1}wh_2\ldots h_{n+2}\\
        \implies&
        abaag_1\ldots g_n = wh_2\ldots h_{n+2}\\
        \implies&
        wa^{-1}b^{-1}ag_1\ldots g_n=wh_2\ldots h_{n+2}\\
        \implies&
        b^{-1}ag_1\ldots g_n = ah_2\ldots h_{n+2}.
    \end{align*}
    The word $ag_1\ldots g_n$ is geodesic because it is a subword of a geodesic word for $g$. Because this word begins with $a$, there exists no other geodesic spelling representing the same group element that begins with $b$ by Lemma \ref{first letter lemma}. Thus, by Corollary \ref{extending cor}, $b^{-1}ag_1\ldots g_n$ is a geodesic word, so $ah_2\ldots h_{n+2}$ is also geodesic because it consists of the same number of generators. Note that $b^{-1}ag_1\ldots g_{n-1}$ must either lie in $A_a$ or $A_{b^{-1}}$ because it begins with $b^{-1}$. If a geodesic word beginning with $b^{-1}$ lies in $A_a$, then it must begin with the prefix $b^{-1}a^{-1}w$, so $b^{-1}ag_1\ldots g_{n-1}$ instead lies in $A_{b^{-1}}$. Therefore, $ah_2\ldots h_{n+2}$ must also lie in $A_{b^{-1}}$, which implies that $h_2=w^{-1}$ because all geodesic words in $A_{b^{-1}}$ that begin with $a$ must begin with the prefix $aw^{-1}$. We then have
    \begin{align*}
        &b^{-1}ag_1\ldots g_n = aw^{-1}h_3\ldots h_{n+2}\\
        \implies&
        wa^{-1}b^{-1}ag_1\ldots g_n = h_3\ldots h_{n+2}\\
        \implies&
        abaag_1\ldots g_n = h_3\ldots h_{n+2} \\
        \implies&
        aag_1\ldots g_n = b^{-1}a^{-1}h_3\ldots h_{n+2},
    \end{align*}
    which is a contradiction by Lemma \ref{double a}. The argument for $g'$ is identical.
    \end{proof}

The following proposition gives a sufficient condition for a point in $F_2$ to have zero curvature with respect to $S_\gamma$. In our proof of Theorem \ref{intro zero curvature dense}, we show that points satisfying this condition not only exist, but exist with positive density.

\begin{prop} \label{zero curvature}
    With respect to $S_\gamma$, $\kappa(g) = 0$ for all $g\in F_2$ such that $g$ has a geodesic spelling of the form $g=(aba)^{\pm 1}(h) (aba)^{\pm 1}$ or $g=(aba)^{\pm 1} (h) (aba)^{\mp 1}$, where $h$ is a geodesic subword such that $|h|=|g|-6$.
\end{prop}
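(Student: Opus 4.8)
The plan is to reduce to two representative shapes of $g$ by a symmetry argument, and then in each shape to evaluate $|s^{-1}gs|$ for all six $s\in S_\gamma$ by exhibiting explicit geodesic spellings, using $w=ababa$ to rewrite the three‑letter blocks $aba$ and $a^{-1}b^{-1}a^{-1}$ in several ways.

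\emph{Reduction.} The assignment $a\mapsto a^{-1}$, $b\mapsto b^{-1}$ extends to an automorphism $\phi$ of $F_2$ (since $\{a^{-1},b^{-1}\}$ also freely generates $F_2$), and as $\phi(w)=w^{-1}$ we get $\phi(S_\gamma)=S_\gamma$ with $\phi$ swapping $S_\gamma^1$ and $S_\gamma^2$. Hence $\phi$ induces an automorphism of the Cayley graph, so $|\phi(g)|=|g|$, and reindexing the sum defining $\GenCon$ gives $\GenCon(\phi(g))=\GenCon(g)$, whence $\kappa(\phi(g))=\kappa(g)$. Applying $\phi$ to a geodesic spelling of the stated form again gives one of the stated form, turning a spelling beginning with $aba$ into one beginning with $a^{-1}b^{-1}a^{-1}$; so it suffices to treat $g=aba\,h\,aba$ and $g=aba\,h\,a^{-1}b^{-1}a^{-1}$, where $h=h_1\cdots h_m$ with $m=|g|-6$ (in the second case $g\ne 1$ forces $m\ge 1$).

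\emph{First shape.} Everything uses only Corollary~\ref{extending cor}, Lemma~\ref{first letter lemma}, and the identities $aba=b^{-1}a^{-1}w=wa^{-1}b^{-1}$, $abab=wa^{-1}$, $abaw^{-1}=b^{-1}a^{-1}$, $a^{-1}b^{-1}a^{-1}=w^{-1}ab=baw^{-1}$, $a^{-1}b^{-1}a^{-1}b^{-1}=w^{-1}a$, $a^{-1}b^{-1}a^{-1}w=ba$, all immediate from $w=ababa$. Take $g=aba\,h\,aba$. Its displayed spelling ends in $a\in S_\gamma^1$, so by Lemma~\ref{first letter lemma} every geodesic spelling of $g$ ends in $S_\gamma^1$; rewriting the final block $aba$ as $b^{-1}a^{-1}w$ and as $wa^{-1}b^{-1}$ gives geodesic spellings of $g$ ending in $w$ and in $b^{-1}$, so by Corollary~\ref{extending cor} we have $|gs|=|g|-1$ for $s\in S_\gamma^2$ and $|gs|=|g|+1$ for $s\in S_\gamma^1$. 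If $s\in S_\gamma^2$, then $gs$ has a geodesic spelling of length $|g|-1$ beginning with $a\in S_\gamma^1$ (namely $ga^{-1}=aba\,h\,ab$, $gb=aba\,h\,w\,a^{-1}$, $gw^{-1}=aba\,h\,b^{-1}a^{-1}$), so by Lemma~\ref{first letter lemma} it has none beginning with $s$, and Corollary~\ref{extending cor} gives $|s^{-1}(gs)|=|gs|+1$, i.e.\ $|s^{-1}gs|=|g|$. If $s\in S_\gamma^1$, then $gs$ has a geodesic spelling of length $|g|+1$ beginning with $s$: append $a$ to the spelling of $g$ for $s=a$; use $gb^{-1}=b^{-1}a^{-1}w\;h\;a\,b\,a\,b^{-1}$ (valid since $b^{-1}a^{-1}w=aba$) for $s=b^{-1}$; and $gw=w\,a^{-1}b^{-1}\;h\;a\,b\,a\,w$ (valid since $wa^{-1}b^{-1}=aba$) for $s=w$. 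Then $|s^{-1}(gs)|=|gs|-1$, i.e.\ $|s^{-1}gs|=|g|$. Summing, $\GenCon(g)=\tfrac16\sum_{s\in S_\gamma}|s^{-1}gs|=|g|$, so $\kappa(g)=0$.

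\emph{Second shape and conclusion.} For $g=aba\,h\,a^{-1}b^{-1}a^{-1}$ the argument runs identically, except that the displayed spelling now ends in $a^{-1}\in S_\gamma^2$; rewriting the final block via $a^{-1}b^{-1}a^{-1}=w^{-1}ab=baw^{-1}$ shows $|gs|=|g|-1$ for $s\in S_\gamma^1$ and $|gs|=|g|+1$ for $s\in S_\gamma^2$, exchanging the roles of $S_\gamma^1$ and $S_\gamma^2$. The same spelling manipulations then give $|s^{-1}gs|=|g|-2$ for the three $s\in S_\gamma^1$ and $|s^{-1}gs|=|g|+2$ for the three $s\in S_\gamma^2$; these cancel in the average, so again $\GenCon(g)=|g|$ and $\kappa(g)=0$. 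With the reduction this proves the proposition. The one delicate point is producing, for $s\in\{b^{-1},w\}$, a geodesic spelling of $gs$ that \emph{begins} with $s$: appending $s$ to a geodesic spelling of $g$ yields a word beginning with $a$, so one must re‑express the leading block $aba$ as $b^{-1}a^{-1}w$ (for $s=b^{-1}$) or pass to the conjugate $w^{-1}gw$ and prepend $w$ (for $s=w$) and then check the length is still exactly $|gs|$; keeping that bookkeeping straight, and correctly invoking Lemma~\ref{first letter lemma} to rule out the ``wrong‑piece'' leading generators in the $s\in S_\gamma^2$ subcases, is where the real work lies, with everything else reducing to the elementary identities above.
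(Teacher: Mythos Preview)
Your proof is correct and follows essentially the same route as the paper: in each case you compute $|s^{-1}gs|$ for all six generators by rewriting the blocks $aba$ and $a^{-1}b^{-1}a^{-1}$ via the identities coming from $w=ababa$, invoking Lemma~\ref{first letter lemma} and Corollary~\ref{extending cor} to pin down when left/right multiplication by a generator raises or lowers length. The only organizational differences are that you use the generating-set-preserving automorphism $a\mapsto a^{-1}$, $b\mapsto b^{-1}$ to halve the casework (the paper instead invokes $g\mapsto g^{-1}$ and an ``identical argument''), and you group the six computations by the partition $S_\gamma=S_\gamma^1\sqcup S_\gamma^2$ rather than treating each generator separately; neither changes the substance.
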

\begin{proof}
    Suppose first that $g=(aba)(h)(aba)$. Then
    \begin{equation*}
        ga^{-1} = (aba)(h)(aba)a^{-1} = (aba)(h)(ab),
    \end{equation*}
    which must be a geodesic spelling of $|ga^{-1}|$ because it consists of $|g|-1$ letters, the minimum length of a spelling of $ga^{-1}$ by the triangle inequality. By Lemma \ref{first letter lemma}, every geodesic spelling of $ga^{-1}$ must begin with an element of $S_\gamma^1$ so no geodesic spelling begins with $a^{-1}$. Combining this fact with Corollary \ref{extending cor}, we have
    \begin{equation*}
        |aga^{-1}|= |a(aba)(h)(ab)| = |(aba)(h)(ab)|+1 = (|g|-1)+1=|g|.
    \end{equation*}
    Likewise, we have
    \begin{align*}
        |a^{-1}ga| &= |a^{-1}(aba)(h)(aba)a| = |(ba)(h)(aba)a|\\
        &= |(ba)(h)(aba)|+1 = (|g|-1)+1 = |g|.
    \end{align*}
    To compute the length of conjugations by the other generators, we use the relation $aba=wa^{-1}b^{-1}=b^{-1}a^{-1}w$ to show that
    \begin{align*}
        |bgb^{-1}| &= |b(b^{-1}a^{-1}w)(h)(aba)b^{-1}| = |(a^{-1}w)(h)(aba)b^{-1}|\\
        &= |(a^{-1}w)(h)(aba)|+1 = (|g|-1)+1 = |g|, \\
        |b^{-1}gb| &= |b^{-1}(aba)(h)(wa^{-1}b^{-1})b| = |b^{-1}(aba)(h)(wa^{-1})|\\
        &= |(aba)(h)(wa^{-1})|+1 = (|g|-1)+1 = |g|, \\
        |wgw^{-1}| &= |w(aba)(h)(b^{-1}a^{-1}w)w^{-1}| = |w(aba)(h)(b^{-1}a^{-1})| \\
        &= |(aba)(h)(b^{-1}a^{-1})|+1 = (|g|-1)+1 = |g|, \\
        |w^{-1}gw| &= |w^{-1}(wa^{-1}b^{-1})(h)(aba)w| = |(a^{-1}b^{-1})(h)(aba)w| \\
        &= |(a^{-1}b^{-1})(h)(aba)|+1 = (|g|-1)+1 = |g|.
    \end{align*}
    Therefore, $\GenCon(g) = \frac{1}{|S_\gamma|}\sum_{s\in S_\gamma}|s^{-1}gs| = |g|$, so $\kappa(g) = 0$. 
    
    To prove the case of $g=(aba)^{-1}(h)(aba)^{-1}$, note that $g^{-1}$ is a group element of the type considered in the previous case. Therefore, because the word length of a group element is equal to the length of its inverse, we have $|s^{-1}gs| = |s^{-1}g^{-1}s| = |g^{-1}| = |g|$ for all $s\in S_\gamma$. Therefore, $\GenCon(g) = \frac{1}{|S_\gamma|}\sum_{s\in S_\gamma}|s^{-1}gs| = |g|$, so $\kappa(g) = 0$.
    
    Now suppose that we have $g=(aba)(h)(aba)^{-1}$. Then, using the relations $aba = b^{-1}a^{-1}w = wa^{-1}b^{-1}$ and $a^{-1}b^{-1}a^{-1} = w^{-1}ab = baw^{-1}$, we have
    \begin{gather*}
        |a^{-1}ga| = |a^{-1}(aba)(h)(a^{-1}b^{-1}a^{-1})a| = |(ba)(h)(a^{-1}b^{-1})| \leq |g|-2,\\
        |bgb^{-1}| = |b(b^{-1}a^{-1}w)(h)(w^{-1}ab)b^{-1}| = |(a^{-1}w)(h)(w^{-1}a)| \leq |g|-2,\\
        |w^{-1}gw| = |w^{-1}(wa^{-1}b^{-1})(h)(baw^{-1})w| = |(a^{-1}b^{-1})(h)(ba)| \leq |g|-2.
    \end{gather*}
    By the triangle inequality, $|s^{-1}gs| \geq |g|-2$ for any generator $s\in S_\gamma$. Therefore, each of the above conjugations has a length of exactly $|g|-2$.
    
    Because we have a geodesic spelling of $g$ beginning with $a$, no geodesic spelling of $g$ can begin with $a^{-1}$ by Lemma \ref{first letter lemma}. Combining this fact with Corollary \ref{extending cor}, we have that the spelling
    \begin{equation*}
        ag = a(aba)(h)(aba)^{-1}
    \end{equation*}
    is geodesic. This spelling ends with $a^{-1}$, so we can once again apply Lemma \ref{first letter lemma} to show that no geodesic spelling of $ag$ ends with $a$. Thus, the spelling
    \begin{equation*}
        aga^{-1} = a(aba)(h)(aba)^{-1}a^{-1}
    \end{equation*}
    is geodesic, so $|aga^{-1}| = |g|+2$. Likewise, no geodsic spelling of $g$ starts with $b$ or $w^{-1}$ or ends with $b^{-1}$ or $w$. Thus,
    \begin{gather*}
        |b^{-1}gb| = |b^{-1}(aba)(h)(aba)^{-1}b| = |g|+2,\\
        |wgw^{-1}| = |w(aba)(h)(aba)^{-1}w^{-1}| = |g|+2.
    \end{gather*}
    Therefore, $\GenCon(g) = \frac{1}{|S_\gamma|}\sum_{s\in S_\gamma}|s^{-1}gs| = \frac{1}{6}(3(|g|+2)+3(|g|-2))= |g|$, so $\kappa(g) = 0$. The proof for the case of $g=(aba)^{-1}(h)(aba)$ is identical, with $|s^{-1}gs| = |g|-2$ for $s = a^{-1}$, $b$, or $w^{-1}$, and $|s^{-1}gs| = |g|+2$ for $s = a$, $b^{-1}$, or $w$.
\end{proof}

The following proposition gives a sufficient condition for points in $F_2$ to have negative curvature with respect to $S_\gamma$, which we also show to be true with positive density in our proof of Theorem \ref{intro zero curvature dense}.

\begin{prop} \label{negative double a}
    With respect to $S_\gamma$, we have $\kappa(g) < 0$ for all $g\in F_2$ such that a geodesic spelling of $g$ has the form $g=(aa)^{\pm 1} (h) (aa)^{\pm 1}$ or $g=(aa)^{\pm 1}(h) (aa)^{\mp 1}$, where $h$ is a geodesic subword such that $|h|=|g|-4$.
\end{prop}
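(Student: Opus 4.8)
The plan is to mirror the proof of Proposition~\ref{zero curvature}: first cut down the number of sign patterns by symmetry, then compute the six conjugation lengths $|s^{-1}gs|$ one generator at a time and add them up.

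For the reduction, note that $|x|=|x^{-1}|$ gives $\GenCon(g)=\GenCon(g^{-1})$, hence $\kappa(g)=\kappa(g^{-1})$ (this is already used in Proposition~\ref{zero curvature}). Moreover the automorphism $\phi$ of $F_2$ with $\phi(a)=a^{-1}$, $\phi(b)=b^{-1}$ sends $w=ababa$ to $a^{-1}b^{-1}a^{-1}b^{-1}a^{-1}=w^{-1}$, so $\phi(S_\gamma)=S_\gamma$; thus $\phi$ is an isometry of $(F_2,d_{S_\gamma})$ and $\kappa(\phi(g))=\kappa(g)$. Since inversion turns the pattern $a^{-1}a^{-1}(\cdot)a^{-1}a^{-1}$ into $aa(\cdot)aa$ and $\phi$ turns $a^{-1}a^{-1}(\cdot)aa$ into $aa(\cdot)a^{-1}a^{-1}$, it is enough to prove $\kappa(g)<0$ in the ``parallel'' case $g=aa\,h\,aa$ and the ``antiparallel'' case $g=aa\,h\,a^{-1}a^{-1}$, with $h$ geodesic and $|h|=|g|-4$.

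The crucial point is that in both cases $g$ has the given geodesic spelling beginning with $a\in S_\gamma^1$, so by Lemma~\ref{first letter lemma} every geodesic spelling of $g$ begins with a letter of $S_\gamma^1=\{a,b^{-1},w\}$; in particular none begins with $a^{-1}$, $b$, or $w^{-1}$. By Corollary~\ref{extending cor} this already forces $|s^{-1}g|=|g|+1$, and hence $|s^{-1}gs|\ge|g|$, for every $s\in S_\gamma^2=\{a^{-1},b,w^{-1}\}$. I would then run the remaining bookkeeping: for each generator $s$, compute $|s^{-1}g|$ and then $|s^{-1}gs|$ from Corollary~\ref{extending cor}, reading off the first and last letters of an explicit geodesic spelling (for example $b^{-1}g=b^{-1}aahaa$, or $a^{-1}g=a\,h\,aa$ in the parallel case, and similar spellings for $gs$, $ag$, $g^{-1}s$), and — when the ``forbidden'' first letter at a given step happens to lie in the relevant $S_\gamma^i$ (as with the auxiliary elements whose displayed spellings begin $aa$ or $a^{-1}a^{-1}$) — invoking Lemma~\ref{double a}, which prevents a word beginning $aa$ (resp.\ $a^{-1}a^{-1}$) from having any geodesic spelling beginning $b^{-1}$ (resp.\ $b$). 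The expected outcome is: in the parallel case $|b^{-1}gb|=|bgb^{-1}|=|g|+2$, $|a^{-1}ga|=|aga^{-1}|=|g|$, $|wgw^{-1}|\ge|g|$, and $|w^{-1}gw|\ge|g|-2$; in the antiparallel case $|b^{-1}gb|=|bgb^{-1}|=|aga^{-1}|=|g|+2$, $|a^{-1}ga|\ge|g|-2$, $|wgw^{-1}|\ge|g|$, and $|w^{-1}gw|\ge|g|-2$. In both cases the six quantities sum to at least $6|g|+2$, so $\GenCon(g)\ge|g|+\tfrac13>|g|$ and therefore $\kappa(g)=\frac{|g|-\GenCon(g)}{|g|}<0$.

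The main obstacle is exactly this case bookkeeping — producing the correct geodesic spelling at each of the twelve (generator, case) combinations and deciding whether Lemma~\ref{first letter lemma} alone suffices or Lemma~\ref{double a} is also needed — together with the feature, absent from Proposition~\ref{zero curvature}'s proof but present here, that in the antiparallel case conjugation by $a$ actually shortens $g$ by $2$, which is why one needs the compensating value $|aga^{-1}|=|g|+2$ there. The one step I first expected to be genuinely hard, controlling conjugation by $w^{\pm1}$, turns out to be avoidable: the crude bounds $|wgw^{-1}|\ge|g|$ (because $w^{-1}\in S_\gamma^2$) and $|w^{-1}gw|\ge|g|-2$ (triangle inequality) already push the sum past $6|g|$. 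Pinning down the sharp value $\kappa(g)=-\tfrac{4}{3|g|}$ would in addition require showing that no geodesic spelling of $g=aahaa$ begins with $w$ — which one would establish by inspecting the freely reduced $\{a,b\}$-word of $g$ — but that refinement is not needed for the sign.
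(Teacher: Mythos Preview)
Your approach is correct and essentially identical to the paper's: both use Lemma~\ref{first letter lemma} and Lemma~\ref{double a} through Corollary~\ref{extending cor} to force enough of the conjugates $|s^{-1}gs|$ to be $|g|+2$ (or at least $|g|$) so that the sum exceeds $6|g|$. The paper does not invoke your automorphism $\phi$ for the case reduction and squeezes out the slightly sharper bound $\sum_s|s^{-1}gs|\ge 6|g|+4$ (four conjugates of length $|g|+2$ in the antiparallel case, and $|w^{-1}gw|\ge|g|$ rather than $|g|-2$ in the parallel case), but the structure and the key lemmas used are the same.
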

\begin{proof}
    Suppose first that $g = (aa)(h)(a^{-1}a^{-1})$. Then no geodesic spelling of $g$ begins with $a^{-1}$, $b$, or $w^{-1}$ by Lemma \ref{first letter lemma}, and no geodesic spelling begins with $b^{-1}$ by Lemma \ref{double a}. If $s\in \{a, b^{-1}, w, b\}$, then the word $s(aa)(h)(a^{-1}a^{-1})=sg$ is geodesic by Corollary \ref{extending cor}. Applying Lemmas \ref{first letter lemma} and \ref{double a} again, we have that no geodesic spelling of $sg$ ends with $a$, $b^{-1}$, $w$, or $b$. Therefore, if $s\in \{a, b^{-1}, w, b\}$, then the spelling $s(aa)(h)(a^{-1}a^{-1})s^{-1}=sgs^{-1}$ is geodesic by Corollary \ref{extending cor}, so $|sgs^{-1}| = |g|+2$. Because this holds for four out of the six choices for generator $s\in S_\gamma$, and the minimum possible value for $|sgs^{-1}|$ is $|g|-2$ by the triangle inequality, this implies that $\GenCon(g) = \frac{1}{|S_\gamma|}\sum_{s\in S_\gamma}|s^{-1}gs|>|g|$ so $\kappa(g)<0$ for all such $g$. The proof for $g = (a^{-1}a^{-1})(h)(aa)$ is identical, with $|sgs^{-1}| = |g|+2$ for $s\in \{a^{-1}, b^{-1}, w^{-1}, b\}$.
    
    Now suppose that $g = (aa)(h)(aa)$. Then again by Lemmas \ref{first letter lemma} and \ref{double a}, no geodesic spellings of $g$ start or end with $b$ or $b^{-1}$, so $|b^{-1}gb|= |bgb^{-1}| = |g|+2$. No geodesic spelling of $g$ begins or ends with $a^{-1}$ or $w^{-1}$ by Lemma \ref{first letter lemma}, so $|s^{-1}gs|\geq |g|$ for $s\in\{a,a^{-1},w,w^{-1}\}$. Therefore, we again have $\GenCon(g) = \frac{1}{|S_\gamma|}\sum_{s\in S_\gamma}|s^{-1}gs|>|g|$ so $\kappa(g)<0$.
    
    Finally, the proof for $g = (a^{-1}a^{-1})(h)(a^{-1}a^{-1})$ follows by noting that $|sgs^{-1}|=|sg^{-1}s^{-1}| = |g^{-1}|+2=|g|+2$ for $s=b$ or $b^{-1}$ because $g^{-1}$ is a group element of the type considered in the previous case. Likewise, we have $|sgs^{-1}|=|sg^{-1}s^{-1}| \geq |g^{-1}|=|g|$ for $s\in\{a,a^{-1},w,w^{-1}\}$. Therefore, $\GenCon(g) = \frac{1}{|S_\gamma|}\sum_{s\in S_\gamma}|s^{-1}gs|>|g|$ so $\kappa(g)<0$.
\end{proof}

In order to prove Theorem \ref{intro zero curvature dense}, we recall the following useful inequality:
\begin{lem}  \label{frac decomp}
    Let $a_1,\ldots ,a_k,b_1,\ldots ,b_k$ be positive integers such that $\frac{a_1}{b_1} \leq \ldots  \leq \frac{a_k}{b_k}$. Then $\frac{a_1}{b_1} \leq \frac{\sum_{i=1}^k a_i}{\sum_{j=1}^k b_j}$.
\end{lem}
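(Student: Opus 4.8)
The plan is to reduce the chain of hypotheses to $k$ pairwise comparisons against the first fraction, clear denominators in each, and then sum. First I would note that the full chain $\frac{a_1}{b_1}\le\cdots\le\frac{a_k}{b_k}$ is more than we need: all that is used is that $\frac{a_1}{b_1}\le\frac{a_i}{b_i}$ for every $i\in\{1,\ldots,k\}$, which is immediate from transitivity. Since every $b_i$ is a positive integer, cross-multiplying each of these inequalities is valid and does not reverse the direction, so $a_1 b_i \le a_i b_1$ for all $i$.

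Next I would sum these $k$ inequalities over $i$, which gives $a_1\sum_{i=1}^k b_i \le b_1\sum_{i=1}^k a_i$. Dividing both sides by the positive quantity $b_1\sum_{i=1}^k b_i$ again preserves the inequality and yields precisely $\frac{a_1}{b_1}\le\frac{\sum_{i=1}^k a_i}{\sum_{j=1}^k b_j}$, which is the claim.

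There is no genuine obstacle here; the only points that require any attention are that the cross-multiplication step and the final division both preserve the sense of the inequality, and these are guaranteed because all the $b_i$ (and hence all partial sums of them) are strictly positive. As an alternative, one could induct on $k$, using the two-term mediant inequality ``$\frac{a}{b}\le\frac{c}{d}$ implies $\frac{a}{b}\le\frac{a+c}{b+d}\le\frac{c}{d}$'' as the base case, but the direct summation argument above is shorter and avoids bookkeeping.
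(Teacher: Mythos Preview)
Your argument is correct and in fact slightly cleaner than the paper's. The paper proceeds by induction on $k$: it first proves the two-term mediant inequality $\frac{a_1}{b_1}\le\frac{a_1+a_2}{b_1+b_2}$ by cross-multiplication, and then for the inductive step applies the hypothesis to $\frac{a_2}{b_2},\ldots,\frac{a_l}{b_l}$ and feeds the result back into the $k=2$ case. You instead observe that only the $k-1$ comparisons $\frac{a_1}{b_1}\le\frac{a_i}{b_i}$ are needed, cross-multiply all of them at once, and sum. This buys you a one-step proof with no induction and no bookkeeping; the paper's route has the minor advantage that it isolates the two-term mediant inequality as a reusable ingredient, but for the purposes of this lemma your direct summation is preferable. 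Your closing remark already anticipates the paper's approach as the alternative.
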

\begin{proof}
    This follows from induction on $k$. For $k=2$, we have
    \begin{gather*}
        \frac{a_1}{b_1} \leq \frac{a_2}{b_2} \implies
        a_1b_2 \leq a_2b_1 \implies\\
        a_1b_2+a_1b_1 \leq a_2b_1+a_1b_1 \implies
        \frac{a_1}{b_1} \leq \frac{a_1+a_2}{b_1+b_2}.
    \end{gather*}
    Suppose that the desired inequality holds for $k\leq l-1$. Then we have $\frac{a_2}{b_2} \leq \frac{\sum_{i=2}^l a_i}{\sum_{j=2}^l b_j}$. By assumption, $\frac{a_1}{b_1} \leq \frac{a_2}{b_2}$. Therefore, we can apply the result for $k=2$ to conclude that
    \begin{equation*}
        \frac{a_1}{b_1} \leq \frac{\sum_{i=2}^l a_i}{\sum_{j=2}^l b_j} \implies \frac{a_1}{b_1} \leq \frac{a_1+\sum_{i=2}^l a_i}{b_1+\sum_{j=2}^l b_j} = \frac{\sum_{i=1}^l a_i}{\sum_{j=1}^l b_j}.
    \end{equation*}
\end{proof}

\begin{proof}[Proof of Theorem \ref{intro zero curvature dense}]
    Define the sets
    \begin{align*}
        P_n \coloneqq \{g \in S_n|&~\text{there exists a geodesic spelling}\\&~g=g_1\ldots g_n~
        \text{with}~g_1,g_n \in \{a,a^{-1}\}\}.
    \end{align*}
    We claim that the ratio $\frac{|P_n|}{|S_n|}$ is bounded below by some positive number for all $n>0$. For $n>2$ and $g \in S_{n-2}$, let $g_1\ldots g_{n-2}$ be a geodesic spelling of $g$. If $g_1\in S_\gamma^1$, then by Lemma \ref{first letter lemma} no geodesic spelling of $g$ begins with $a^{-1}$. Hence, by Corollary \ref{extending cor} the word $ag_1\ldots g_{n-2}\in S_{n-1}$ is geodesic. Similarly, if $g_1\in S_\gamma^2$, then the word $a^{-1}g_1\ldots g_{n-2}\in S_{n-1}$ is geodesic. Let $h_1g_1\ldots g_{n-2}$ be a geodesic word where $h_1\in \{a,a^{-1}\}$. If $g_{n-2}\in S_\gamma^1$, then no geodesic spelling of $h_1g$ ends with $a^{-1}$ by Lemma \ref{first letter lemma}, so by Corollary \ref{extending cor} the word $h_1g_1\ldots g_{n-2}a$ is geodesic. Similarly, if $g_{n-2}\in S_\gamma^2$, then $h_1g_1\ldots g_{n-2}a^{-1}$ is geodesic. Therefore, for all $g\in S_{n-2}$, there exists at least one element of $P_n$ of the form $h_1gh_2$ where $h_1,h_2\in \{a,a^{-1}\}$.
    
    Note that every element of $S_n$ for $n>2$ can be written as $h_1gh_2$ for some $h_1,h_2\in S_\gamma$ and $g\in S_{n-2}$. Because there are six generators in $S_\gamma$, there are no more than $6^2=36$ elements of $S_n$ of the form $h_1gh_2$ for each $g\in S_{n-2}$. Therefore, for all $n>2$,
    \begin{equation*}
        \frac{|P_n|}{|S_n|}\geq \frac{1}{36}.
    \end{equation*}
    $P_1$ contains the element $a$, and $P_2$ contains the element $aa$, so both are nonempty. Therefore, letting $\eta= \min\left\{\frac{1}{|B_2|},\frac{1}{36}\right\}$, we have
    \begin{equation*}
        \frac{|P_n|}{|S_n|} \geq \eta
    \end{equation*}
    for all $n>0$ as desired.
    
    Let $g\in P_n$ be such that a geodesic spelling of $g$ is $ag_1\ldots g_{n-1}$ for some generators $g_1,\ldots,g_{n-1}\in S_\gamma$. Then no geodesic spelling of $g$ begins with $a^{-1}$ by Lemma \ref{first letter lemma}, so the word $a(ag_1\ldots g_{n-1}) = ag$ is geodesic by Corollary \ref{extending cor}. By Lemma \ref{double a}, no geodesic spelling of $ag$ begins with $b^{-1}$, so the word $b(aag_1\ldots g_{n-1})=bag$ is geodesic by Corollary \ref{extending cor}. Finally, no geodesic spelling of $bag$ begins with $a^{-1}$ by Lemma \ref{b double a}, so the word $a(baag_1\ldots g_{n-1}) = abag$ is geodesic by Corollary \ref{extending cor}. Therefore, we have $|(aba)g| = n+3$.
    
    An identical argument shows that, if $g\in P_n$ is such that a geodesic spelling of $g$ is $a^{-1}g_1\ldots g_{n-1}$ for generators $g_1,\ldots, g_{n-1}\in S_\gamma$, then the word $(aba)^{-1}a^{-1}g_1\ldots g_{n-1}$ is geodesic with length $n+3$. Applying these claims to the inverses of elements of $P_n$, we get that we either have $|g(aba)|=n+3$ or $|g(aba)^{-1}|=n+3$ for all $g\in P_n$. We can thus conclude that, if a geodesic spelling of some $g\in P_n$ is $a^{\pm1} g_1\ldots g_{n-2}a^{\pm1}$, then the word $(aba)^{\pm1} (a^{\pm1} g_1\ldots g_{n-2}a^{\pm1})(aba)^{\pm1}$ is geodesic with length $n+6$, and if a geodesic spelling of $g\in P_n$ is $a^{\pm1} g_1\ldots g_{n-2}a^{\mp1}$, then the word $(aba)^{\pm1} (a^{\pm1} g_1\ldots g_{n-2}a^{\mp1})(aba)^{\mp1}$ is geodesic with length $n+6$. This fact implies a natural bijection between the sets $P_{n-6}$ and 
    \begin{align*}
        Q_n \coloneqq \{g \in S_n|&~g=(aba)^{\pm1}(h)(aba)^{\pm1}~\text{or}~g=(aba)^{\pm1}(h)(aba)^{\mp1}\\
        &~\text{is a geodesic spelling for some}~h\in P_{n-6}\}.
    \end{align*}  
    for all $n>6$. Therefore, $|P_{n-6}|=|Q_n|$. Recall that elements of $Q_n$ have zero curvature by Proposition \ref{zero curvature}.
    
    Every element of the sphere $S_n$ can be written as an element of the sphere $S_{n-1}$ that has been left-multiplied by one of the six generators in $S$. Therefore, $|S_n| \leq 6|S_{n-1}|$. For $C > 6^6$ and $n > 6$, this implies that $|S_n| < C|S_{n-6}|$. We therefore have for all $n>6$ that
    \begin{equation}  \label{Q/S}
        \frac{|Q_n|}{|S_n|} > \frac{|Q_n|}{C|S_{n-6}|} = \frac{|P_{n-6}|}{C|S_{n-6}|} \geq \frac{\eta}{C}.
    \end{equation}
    
    Let $\epsilon_1 = \min\left\{\frac{1}{|B_7|}, \frac{\eta}{C}\right\}$ and $N_1 = 7$. Applying Lemma \ref{frac decomp}, equation \ref{Q/S}, and the fact that $|Q_7| > 1$ (it contains the points $(aba)(a)(aba)$ and $(aba)^{-1}(a^{-1})(aba)^{-1}$), we have for all $n\geq N_1$ that
    \begin{align*}
         \frac{\sum_{i=7}^n|Q_i|}{|B_n|} &> \frac{1+\sum_{i=8}^n|Q_i|}{|B_7|+\sum_{j=8}^n |S_j|}\\ &\geq \min\left\{\frac{1}{|B_7|},\frac{|Q_8|}{|S_8|},\ldots ,\frac{|Q_n|}{|S_n|}\right\} \\
         &\geq \min\left\{\frac{1}{|B_7|}, \frac{\eta}{C}\right\} = \epsilon_1.
    \end{align*}
    Therefore, the set of points in the union of all the $Q_n$ has positive density in $F_2$. Because $\kappa(g) = 0$ for all such points $g$, this proves that $F_2$ has zero curvature with positive density with respect to $S_\gamma$.
    
    We now show that points with negative curvature also have positive density. For $n\geq 5$, define the sets 
    \begin{align*}
        R_n \coloneqq \{g\in S_n|&~g~\text{has a geodesic spelling of the form}\\
        &~g_1(h)g_2~\text{with}~g_1,g_2\in \{a,a^{-1}\}~\text{and}~h\in P_{n-2}\}.
    \end{align*} 
    Note that $a^{\pm1}(h)a^{\pm1}$ is geodesic if and only if $h\in P_{n-2}$ is such that a geodesic spelling of $h$ is $a^{\pm1} h_1\ldots h_{n-4}a^{\pm1}$, and $a^{\pm1}(h)a^{\mp1}$ is geodesic if and only if a geodesic spelling of $h$ is $a^{\pm1} h_1\ldots h_{n-4}a^{\mp1}$. By definition, every element of $P_{n-2}$ satisfies exactly one of these conditions, so there is a bijection between $R_n$ and $P_{n-2}$. Therefore, $|R_n|=|P_{n-2}|$. Recall that elements of $R_n$ have negative curvature for $n\geq 5$ by Proposition \ref{negative double a}.

    Let $D>6^2=36$ so that $|S_n|<D|S_{n-2}|$ for all $n>2$. Then we have
    \begin{equation} \label{R/S}
        \frac{|R_n|}{|S_n|} > \frac{|R_n|}{D|S_{n-2}|} = \frac{|P_{n-2}|}{D|S_{n-2}|} \geq \frac{\eta}{D}.
    \end{equation}
    Let $\epsilon_2= \min\left\{\frac{1}{|B_5|},\frac{\eta}{D}\right\}$ and $N_2= 5$. Then by Lemma \ref{frac decomp}, equation \ref{R/S}, and the fact that $a^5,a^{-5}\in R_5$ so $|R_5|>1$, we have for all $n\geq N_2$ that
    \begin{align*}
         \frac{\sum_{i=5}^n|R_i|}{|B_n|} &> \frac{1+\sum_{i=6}^n|R_i|}{|B_5|+\sum_{j=6}^n |S_j|}\\ &\geq \min\left\{\frac{1}{|B_5|},\frac{|R_6|}{|S_6|},\ldots ,\frac{|R_n|}{|S_n|}\right\} \\
         &\geq \min\left\{\frac{1}{|B_5|}, \frac{\eta}{D}\right\} = \epsilon_2.
    \end{align*}
    Therefore, the set of points in the union of all the $R_n$ has positive density in $F_2$. Because $\kappa(g)<0$ for all such points $g$, this proves that $F_2$ has negative curvature with positive density with respect to $S_\gamma$.
\end{proof}

\section{Relationship with $\delta$-Hyperbolicity} \label{relationship with hyperbolicity}

The group $F_2$ from Section \ref{free and virtually free} is an example of a \emph{non-elementary} hyperbolic group, which is a hyperbolic group that is not virtually cyclic. We have thus proven that non-elementary hyperbolic groups can have a positive density of points with zero curvature. In this section we prove Theorem \ref{intro negative curvature for hyperbolic} to conclude that we get negative curvature for all but finitely many points in non-elementary hyperbolic groups if we compute the $k$-spherical conjugation curvature for sufficiently large $k$. On the other hand, we show that there exist non-hyperbolic groups that have negative curvature for all non-identity points.

\subsection{Negative Curvature at Larger Radii for Hyperbolic Groups}

For the rest of this section, let $G$ be a non-elementary $\delta$-hyperbolic group that has a word metric $d(\cdot,\cdot)$ and Gromov product $(\cdot,\cdot)_\cdot$ determined by some finite generating set $S$. We begin by proving a series of lemmas that give constants to determine the desired value of $K$ in Theorem \ref{intro negative curvature for hyperbolic}. This proof is analogous to and motivated by \cite[Example 15]{Oll09} for Ollivier's transportation curvature.

\begin{lem}  \label{tree lemma}
    There exists a constant $C_1\geq0$ such that, for all integers $k\geq 1$, $g\in G-B_{4k}$, and $s \in S_k$ we have
    \begin{equation*}
        |(1,gs)_g-(s,gs)_g| \leq C_1.
    \end{equation*}
\end{lem}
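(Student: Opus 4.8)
The plan is to rewrite all three relevant Gromov products in terms of word lengths and then invoke the four-point condition twice with basepoint $g$. Writing $k=|s|$ and using the definition $d(x,y)=|y^{-1}x|$ together with $|x|=|x^{-1}|$, one has $d(1,g)=|g|$, $d(g,gs)=|s|=k$, $d(1,gs)=|gs|$, $d(g,s)=|s^{-1}g|$, $d(s,gs)=|s^{-1}g^{-1}s|=|s^{-1}gs|$, and $d(1,s)=k$, so that
\begin{align*}
    (1,gs)_g &= \tfrac12\bigl(|g| + k - |gs|\bigr), \\
    (s,gs)_g &= \tfrac12\bigl(|s^{-1}g| + k - |s^{-1}gs|\bigr), \\
    (1,s)_g &= \tfrac12\bigl(|g| + |s^{-1}g| - k\bigr).
\end{align*}

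First I would record the trivial estimates coming from the triangle inequality: since $|g|-k\le|gs|\le|g|+k$ we get $0\le(1,gs)_g\le k$, and since $|s^{-1}g|-k\le|s^{-1}gs|\le|s^{-1}g|+k$ we get $0\le(s,gs)_g\le k$. In the other direction, $|s^{-1}g|\ge|g|-k$ gives $(1,s)_g\ge|g|-k$, and the hypothesis $g\in G-B_{4k}$ forces $(1,s)_g\ge|g|-k>3k$. The key consequence is that $(1,s)_g$ is strictly larger than each of $(1,gs)_g$ and $(s,gs)_g$; this is the only place the hypothesis on $|g|$ is used, and in fact $g\notin B_{2k}$ would already suffice for this lemma.

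Now apply the four-point condition at $w=g$. Taking $(x,y,z)=(1,gs,s)$ yields $(1,gs)_g\ge\min\{(1,s)_g,(s,gs)_g\}-\delta$, and since $(1,s)_g>(s,gs)_g$ the minimum equals $(s,gs)_g$, so $(1,gs)_g\ge(s,gs)_g-\delta$. Symmetrically, taking $(x,y,z)=(s,gs,1)$ gives $(s,gs)_g\ge\min\{(1,s)_g,(1,gs)_g\}-\delta=(1,gs)_g-\delta$. Putting the two bounds together, $|(1,gs)_g-(s,gs)_g|\le\delta$, so the lemma holds with $C_1=\delta$.

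There is no real obstacle here; the statement is just a quantitative form of the fact that a $\delta$-hyperbolic space is tree-like up to bounded error. Since $1$ and $s$ lie only a distance $k$ apart while both sit at distance $>3k$ from $g$, the geodesics $[g,1]$ and $[g,s]$ fellow-travel until very near their far endpoints, so the tripod on $\{g,1,gs\}$ and the tripod on $\{g,s,gs\}$ have essentially the same branch point. The only point requiring a moment's care is verifying that in both applications of the four-point inequality the minimum selects the useful term and not $(1,s)_g$ — which is precisely what the size estimates in the second paragraph guarantee.
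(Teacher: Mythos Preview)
Your proof is correct and is genuinely more direct than the paper's argument. The paper proves this lemma by invoking Gromov's tree approximation for four points (\cite[Section 6.2]{Gro87}): it first verifies the inequality $(s,gs)_1+(s,gs)_g<d(1,g)+12\delta$ needed to build a tree $T$ whose spine is a geodesic from $1$ to $g$ with short branches to $s$ and $gs$; in that tree one has $_T(1,gs)_g={}_T(s,gs)_g$ exactly, and then the $4C\delta$ approximation error (with $C\le100$) is propagated through six distances to reach $|(1,gs)_g-(s,gs)_g|\le1200\delta$. You instead bound $(1,gs)_g$ and $(s,gs)_g$ by $k$ directly from the triangle inequality, observe that $(1,s)_g>3k$ because $|g|>4k$, and then the four-point condition with $z=s$ and $z=1$ respectively forces the minimum to land on the term you want, giving $C_1=\delta$ outright. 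The tree picture is morally what underlies both arguments, but by staying with the raw four-point condition you avoid the approximation lemma entirely, obtain a constant smaller by three orders of magnitude, and correctly note that $g\notin B_{2k}$ already suffices here. When this is fed into the proof of Theorem~\ref{intro negative curvature for hyperbolic}, the final threshold $K=\lceil2C_4+C_1+1\rceil$ improves accordingly.
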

\begin{proof}
    By \cite[Section 6.2]{Gro87}, there exists a tree $T$ in the Cayley graph of $(G,S)$ consisting of at most three geodesic segments $T_1$, $T_2$, and $T_3$ such that the extremal points (vertices of degree 1) of $T$ are elements of $\{1,s,g,gs\}$, and which approximates distances between elements of $\{1,s,g,gs\}$ within some error for which we can compute an upper bound. We use $d_T(\cdot,\cdot)$ and $_T(\cdot,\cdot)_\cdot$ to denote respectively the length metric and Gromov product induced on $T$ by the graph metric of the Cayley graph of $(G,S)$. Note that, because $T$ is a subset of this Cayley graph, we have $d_T(x,y)\geq d(x,y)$ for all $x,y\in T$. By the triangle inequality,
    \begin{align*}
        (s,gs)_1+(s,gs)_g
        =& \frac{1}{2}(d(s,1)+d(1,gs)-d(s,gs)+d(s,g)+d(g,gs)-d(s,gs)) \\
        \leq& \frac{1}{2}(k+(|g|+k)-(|g|-2k)+(|g|+k)+k-(|g|-2k|)\\
        =& 4k< |g|.
    \end{align*}
    Therefore, we have the following inequality:
    \begin{equation*}
        (s,gs)_1+(s,gs)_g < |g|+12\delta = d(1,g) + 4(\log_2(4)+1)\delta.
    \end{equation*}
    By Step 2 in \cite[Section 6.2]{Gro87}, this inequality implies that we can let $T_1$ be a geodesic segment connecting $1$ and $g$, and let $T_2$ and $T_3$ be two of the shortest geodesic segments connecting $s$ and $gs$ respectively to points on $T_1$ if $s$ and $gs$ do not already lie on $T_1$ (if they do then $T$ will consist of only one or two geodesic segments). By Property 3 in \cite[Section 6.2]{Gro87}, we have for all $x,y\in \{1,s,g,gs\}$ that
    \begin{equation*}
        d_T(x,y) \leq d(x,y)+C\delta(\log_2(4))^2 = d(x,y)+4C\delta
    \end{equation*}
    for some $C \leq 100$. Because $d_T(x,y)\geq d(x,y)$, we can rewrite this statement as
    \begin{equation}  \label{tree ineq}
        |d_T(x,y)-d(x,y)|\leq 4C\delta.
    \end{equation}
    Because $d(1,s) = d(g,gs) = k$ and $d(1,g) > 4k$, $T_2$ and $T_3$ do not intersect and the intersection of $T_3$ and $T_1$ is closer to $g$ than that of $T_2$ and $T_1$. Therefore, $_T(1,gs)_g=~_T(s,gs)_g$ (if $gs\in T_1$, then both of these products are equal to the distance from $g$ to $gs$). The tree $T$ is shown in Figure \ref{tree-approx}.
    \begin{figure}[h]
        \centering
        \includegraphics[scale = 0.22]{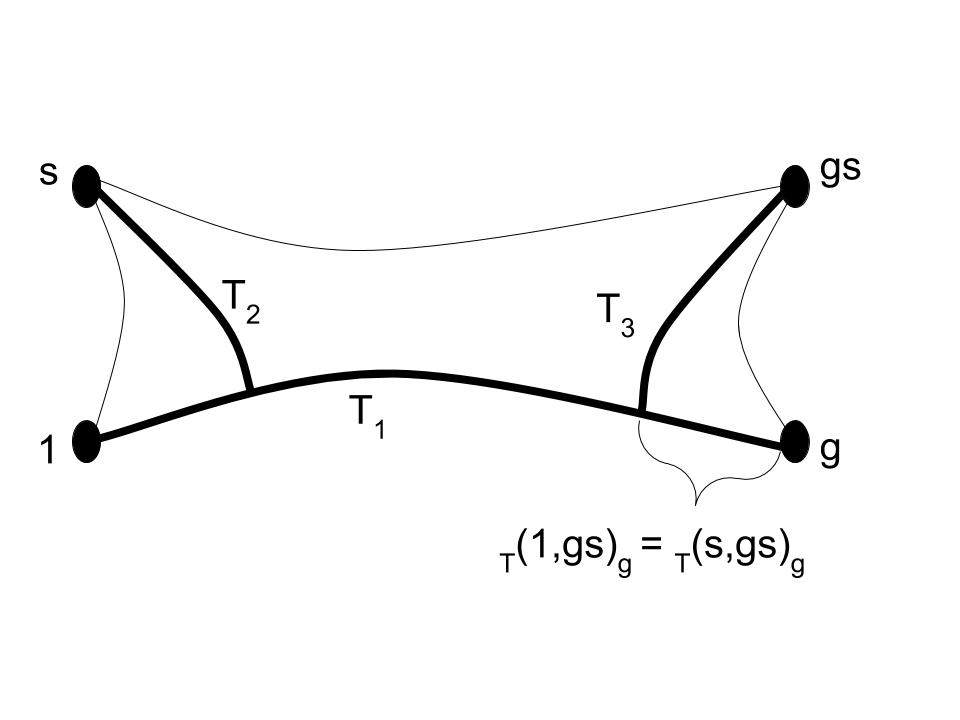}
        \caption{A geodesic tree (bold segments) connecting $1$, $s$, $g$, and $gs$ such that the induced Gromov products $_T(1,gs)_g$ and $_T(s,gs)_g$ are equal.}
        \label{tree-approx}
    \end{figure}    
    
    By the equation $_T(1,gs)_g=~_T(s,gs)_g$, inequality \ref{tree ineq}, and the triangle inequality,
    \begin{align*}
        &|(1,gs)_g-(s,gs)_g| \\
        =& |[~_T(1,gs)_g-~_T(s,gs)_g] + [(1,gs)_g-~_T(1,gs)_g] + [~_T(s,gs)_g-(s,gs)_g]| \\
        \leq& |(1,gs)_g-~_T(1,gs)_g|+|~_T(s,gs)_g-(s,gs)_g| \\
        \leq& \frac{1}{2}(|d(1,g)-d_T(1,g)|+|d(g,gs)-d_T(g,gs)|+|d_T(1,gs)-d(1,gs)| \\
        &~+|d(s,g)-d_T(s,g)|+|d(g,gs)-d_T(g,gs)|+|d_T(s,gs)-d(s,gs)|) \\
        \leq& \frac{1}{2}6(4C\delta) \leq 1200\delta.
    \end{align*}
    Therefore, letting $C_1 = 1200\delta$ gives the desired result.
\end{proof}
 
The following is a well-known fact for non-elementary hyperbolic groups. See, for example, \cite[Corollary IV.1.3.]{Cha12} for a proof.
\begin{lem}  \label{growth lemma}
    There exists a constant $C_2 > 1$ such that, for all $k\geq 1$, we have
    \begin{equation*}
        |B_k| \geq C_2|B_{k-1}|.
    \end{equation*}\qed
\end{lem}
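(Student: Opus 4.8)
The plan is to reduce the multiplicative estimate to a lower bound on sphere sizes. Writing $|B_k| = |B_{k-1}| + |S_k|$, it suffices to produce a constant $\beta > 0$ with $|S_k| \geq \beta\,|B_{k-1}|$ for every $k \geq 1$, since then $|B_k| \geq (1+\beta)|B_{k-1}|$ and we may take $C_2 = 1+\beta$. So the real content of the lemma is that in a non-elementary hyperbolic group the sphere of radius $k$ always carries a fixed proportion of the mass of the ball it bounds, a quantitative reflection of non-amenability.

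To get that, I would invoke the fine growth estimates for non-elementary hyperbolic groups (this is exactly the input recorded in \cite[Corollary IV.1.3]{Cha12}, going back to Coornaert): there is an exponential growth rate $\lambda = \lim_n |B_n|^{1/n} > 1$ and constants $A, B > 0$ with
\begin{equation*}
    A\lambda^n \leq |S_n| \qquad\text{and}\qquad |B_n| \leq B\lambda^n \qquad\text{for all } n \geq 0.
\end{equation*}
Two ingredients feed into this. First, non-elementarity forces $\lambda > 1$: two independent loxodromic elements have suitable powers generating a free subsemigroup that embeds quasi-isometrically, producing $2^n$ elements inside a ball of radius $O(n)$. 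Second, hyperbolicity forces the growth to be \emph{purely} exponential at rate $\lambda$ (via finiteness of cone types, which makes the growth series rational, together with a Perron--Frobenius argument pinning the dominant singularity to a simple pole at $1/\lambda$). Granting these, for every $k \geq 1$ the bound $|B_{k-1}| \leq B\lambda^{k-1}$ gives
\begin{equation*}
    |B_k| = |B_{k-1}| + |S_k| \geq |B_{k-1}| + A\lambda^k \geq |B_{k-1}| + \frac{A\lambda}{B}\,|B_{k-1}| = \Big(1 + \frac{A\lambda}{B}\Big)|B_{k-1}|,
\end{equation*}
so $C_2 = 1 + A\lambda/B > 1$ works. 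If one is uneasy about the constants in the growth estimates at small radii, note only that $G$ is infinite, so $|S_k| \geq 1$ and $|B_k| > |B_{k-1}|$ for every $k \geq 1$; at most finitely many $k$ need this crude argument, and those finitely many ratios, each $>1$, can be folded into the constant.

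The main obstacle is the second ingredient, the upper bound $|B_n| \leq B\lambda^n$ \emph{at the true growth rate} — equivalently, that the growth has no polynomial factor. This cannot be weakened to a crude bound such as $|B_n| \leq |S|^n$, because then the ratio estimate degenerates as soon as $\lambda < |S|$; matching the exponential rates on the two sides is essential, and is exactly where hyperbolicity (not merely exponential growth) is used. A more self-contained route to $|S_k| \geq \beta|B_{k-1}|$ would instead spread each element of $B_{k-1}$ outward to the spheres $S_k, S_{k+1}, \dots$ along extended geodesics, using bounded backtracking and divergence of geodesics in the $\delta$-hyperbolic Cayley graph to keep the multiplicity bounded — but making that multiplicity uniformly bounded in $k$ is delicate, so the cone-type/Coornaert argument is the cleaner path, and I would cite it as the authors do.
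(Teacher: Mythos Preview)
Your proposal is correct and aligns with the paper's treatment: the paper gives no proof at all, simply recording the lemma as a well-known fact and citing \cite[Corollary IV.1.3]{Cha12} (Coornaert's purely exponential growth for non-elementary hyperbolic groups), which is exactly the input you identify and ultimately defer to. Your write-up just unpacks what lies behind that citation --- the reduction to $|S_k|\geq\beta|B_{k-1}|$ and the matching upper and lower exponential bounds at the true growth rate $\lambda$ --- so there is no substantive difference in approach.
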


We have the following lemma that shows that a majority of geodesic curves connecting points in a ball to points far away will pass close to the center of the ball. The proof is analogous to \cite[Proposition 21]{Oll04}.
\begin{lem} \label{ball bounding lemma}
    There exists a constant $C_3>0$ such that, for all $g\in G$, $k\geq 1$, and integers $L$ with $2\lceil\delta\rceil\leq L\leq k$ we have
    \begin{equation*}
        \frac{|A_k(g,L)|}{|B_k|} \leq C_3C_2^{-L},
    \end{equation*}
    where
    \begin{equation*}
        A_k(g,L) \coloneqq \{s\in B_k|(g,s)_1\geq L\},
    \end{equation*}
    and $C_2$ is the constant from Lemma \ref{growth lemma}.
\end{lem}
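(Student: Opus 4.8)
The plan is to bound $|A_k(g,L)|$ by decomposing the set according to the distance $|s|$ of each element $s$ from the identity, and to observe that if $(g,s)_1 \geq L$ then $s$ lies within bounded distance of a geodesic from $1$ to $g$ for a length of at least $L$ at the start. Fix a geodesic $\gamma$ from $1$ to $g$ (or, if $g$ is closer than $k$, just take $\gamma$ to be a geodesic ray-like segment long enough; in the application $g \notin B_{4k}$ so $|g| > 4k \geq L$). The condition $(g,s)_1 \geq L$ says $|s| + |g| - d(s,g) \geq 2L$; combined with thinness of the triangle with vertices $1$, $s$, $g$, this forces the point $p$ on $\gamma$ at distance roughly $(g,s)_1$ from $1$ to satisfy $d(s,p) \leq 2\delta$ (using the $2\delta$-thin triangles characterization, since $(g,s)_1 \geq L \geq 2\lceil\delta\rceil$ guarantees the relevant portion of the tripod branch has length $\geq \delta$). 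So every $s \in A_k(g,L)$ is within $2\delta$ of the point $\gamma_j$ on $\gamma$ at integer parameter $j$ for some $j$ with $L - O(\delta) \leq j \leq k$.

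The second step is a counting estimate. For each fixed $j$, the number of $s \in B_k$ with $d(s,\gamma_j) \leq 2\delta$ is at most $|B_{2\delta}|$ (a constant depending only on $\delta$ and $S$, by left-invariance of the metric), call it $C$. Moreover such an $s$ has $|s| \geq j - 2\delta$, and since it also lies in $B_k$ we only need $j$ ranging over $L - 2\lceil\delta\rceil \leq j \leq k + 2\lceil\delta\rceil$ or so. But crucially, to get the exponential decay I should not sum trivially over all $j$; instead I sum over $j$ and use the growth lemma: if $d(s,\gamma_j)\le 2\delta$ and $j$ is as small as $\approx L$, then $|s| \geq L - 2\lceil\delta\rceil$, so $A_k(g,L) \subseteq \{s \in B_k : |s| \geq L - 2\lceil\delta\rceil\}$, a set whose relative size in $B_k$ is controlled. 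By Lemma~\ref{growth lemma}, $|B_{k}| \geq C_2^{m} |B_{k-m}|$ for any $m \geq 0$, hence $|B_{k} \setminus B_{k-m}| \leq |B_k|$ trivially is not enough — rather I want $|B_k| \geq C_2 |B_{k-1}| \geq \cdots$, which gives $|B_{j}| \leq C_2^{-(k-j)}|B_k|$, so the number of $s$ with $|s| = j$ is at most $|B_j| \leq C_2^{j-k}|B_k|$. Summing the bound $\min\{C\cdot(\text{number of relevant }\gamma_j), C_2^{j-k}|B_k|\}$ over $j$ from $L - 2\lceil\delta\rceil$ to $k$ and using the geometric series $\sum_{j \leq k} C_2^{j-k} = \sum_{i\ge 0} C_2^{-i} = \frac{C_2}{C_2-1}$, together with the fact that the tube around $\gamma$ only contributes $C$ points per level, yields
\begin{equation*}
    |A_k(g,L)| \leq C' \sum_{j=L-2\lceil\delta\rceil}^{k} C_2^{\,j-k}|B_k| \cdot (\text{bounded factor}) \leq C_3 C_2^{-L}|B_k|
\end{equation*}
after absorbing the $C_2^{2\lceil\delta\rceil}$ and the tube constant $C$ into $C_3$. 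Dividing by $|B_k|$ gives the claim.

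The main obstacle I expect is making the "s lies in a $2\delta$-tube around the initial segment of $\gamma$ of length $\approx (g,s)_1$" step fully rigorous in the group setting, i.e. correctly invoking the thin-triangles property and tracking that the parameter $j$ at which $s$ is close to $\gamma$ is genuinely $\geq L - O(\delta)$ rather than merely $\geq$ something small; this is where the hypothesis $L \geq 2\lceil\delta\rceil$ gets used, to ensure the tripod branch toward $g$ is long enough that the identification point falls on the $1$–$g$ edge at parameter close to $(g,s)_1 \geq L$. The counting in the second step is routine once the localization is established, and the geometric-series bookkeeping to extract $C_2^{-L}$ is exactly the point of Lemma~\ref{growth lemma}. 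I would also double-check the edge case where $g$ is not far enough for a length-$L$ initial segment of $\gamma$ to exist, but in every application (and plausibly in the intended statement) $g \notin B_{4k}$ and $L \leq k$, so $|g| > 4k \geq 4L > L$ and this is not an issue.
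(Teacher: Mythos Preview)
Your counting step does not produce the factor $C_2^{-L}$. You decompose by $j=|s|$ and bound the number of $s$ with $|s|=j$ by $|B_j|\le C_2^{\,j-k}|B_k|$, then sum over $j$ from roughly $L$ to $k$. But
\[
\sum_{j=L-2\lceil\delta\rceil}^{k} C_2^{\,j-k}
=\sum_{i=0}^{k-L+2\lceil\delta\rceil} C_2^{-i}
\le \frac{C_2}{C_2-1},
\]
a constant independent of $L$; the displayed inequality $\le C_3 C_2^{-L}|B_k|$ does not follow. Decomposing by $|s|$ cannot work here, because points with $|s|$ close to $k$ can perfectly well satisfy $(g,s)_1\ge L$, and those shells are large. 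Your earlier localization claim is also off: from the thin-triangle condition one gets $d(s,\gamma_{(g,s)_1})\le \bigl(|s|-(g,s)_1\bigr)+2\delta$, not $d(s,\gamma_{(g,s)_1})\le 2\delta$; the point $s$ sits at the end of the $s$-branch of the comparison tripod, at distance $(1,g)_s=|s|-(g,s)_1$ from the centre.

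The paper's argument avoids all of this by picking a \emph{single} reference point. Fix $h$ on a geodesic from $1$ to $g$ with $d(1,h)=L$, and for each $s\in A_k(g,L)$ let $h'$ be the point at distance $L$ from $1$ on a geodesic from $1$ to $s$ (this exists since $(g,s)_1\ge L$ forces $|s|\ge L$). Because $L\le (g,s)_1$, the tripod map identifies $h$ and $h'$, so $d(h,h')\le 2\delta$, whence $d(h,s)\le d(h',s)+2\delta\le (k-L)+2\delta$. Thus the \emph{entire} set $A_k(g,L)$ lies in the single ball $B_{k-L+2\lceil\delta\rceil}(h)$, and one application of the growth lemma gives $|A_k(g,L)|\le|B_{k-L+2\lceil\delta\rceil}|\le C_2^{2\lceil\delta\rceil}C_2^{-L}|B_k|$. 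No summation over levels is needed.
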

\begin{proof}
    Fix any such $g$, $k$, and $L$, and let $s \in B_k$. Suppose that $(g,s)_1 \geq L$. Then by the triangle inequality, $(g,s)_1 = \frac{1}{2}(|g|+|s|-d(g,s)) \leq \frac{1}{2}(|g|+|s|-(|g|-|s|)) = |s|$, so $L \leq |s|$, and we similarly have $L \leq |g|$. Let $h,h'\in G$ respectively be points on a geodesic segment connecting $1$ and $g$ and a geodesic segment connecting $1$ and $s$ such that $d(1,h) = d(1,h') = L$. Applying the $2\delta$-thin condition to a geodesic triangle with vertices $1$, $g$, and $s$ containing $h$ and $h'$, we have that $d(h,s)\leq d(h',s)+d(h,h') \leq (k-L)+2\delta$ (see Figure \ref{gromov to ball} on page \pageref{gromov to ball}). Therefore, $A_k(g,L)\subset B_{k-L+2\lceil\delta\rceil}(h)$. 
    
    By left-invariance of the word metric, it suffices to show that $\frac{|B_{k-L+2\lceil\delta\rceil}|}{|B_k|}$ is bounded above by the desired function of $L$. Note that $k-L+2\lceil\delta\rceil$ is an integer bounded above by $k$. Therefore, letting $C_2$ be the constant from Lemma \ref{growth lemma}, we have
    \begin{gather*}
        C_2|B_{k-1}| \leq |B_k| \implies C_2^{L-2\lceil\delta\rceil}|B_{k-L+2\lceil\delta\rceil}| \leq |B_k| \implies \\
        \frac{|B_{k-L+2\lceil\delta\rceil}|}{|B_k|} \leq C_2^{-L+2\lceil\delta\rceil} \implies \frac{|B_{k-L+2\lceil\delta\rceil}|}{|B_k|} \leq C_3C_2^{-L}.
    \end{gather*}
    where $C_3 \coloneqq C_2^{2\lceil\delta\rceil}$. The fact that $C_3$ does not depend on $g$, $k$, or $L$ implies the desired result.
\end{proof}
\begin{figure}
    \centering
    \includegraphics[scale = 0.27]{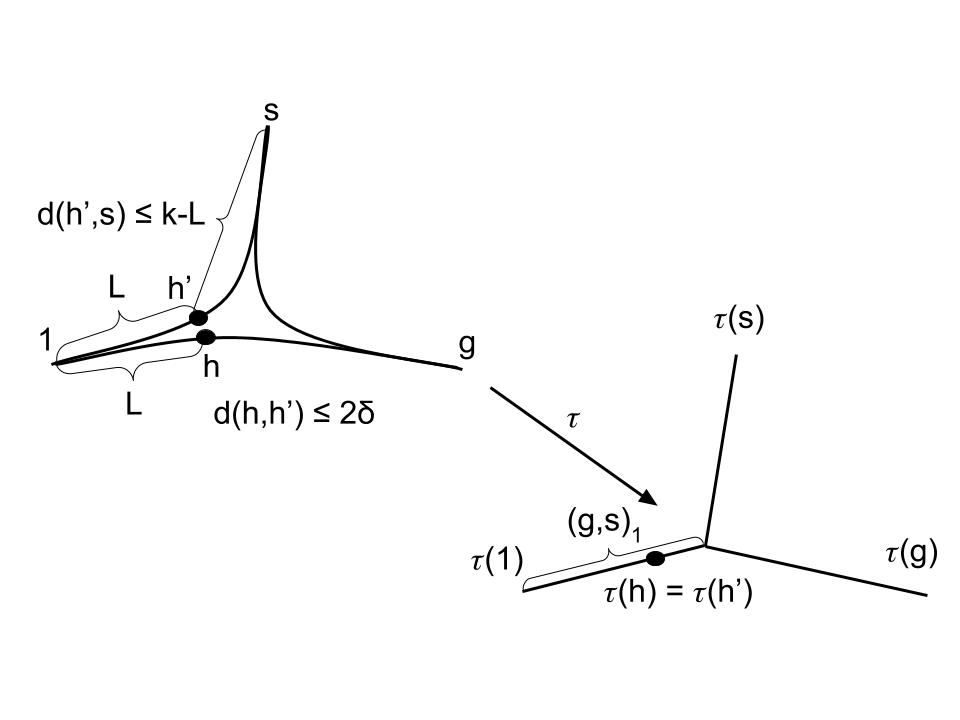}
    \caption{Using the $2\delta$-thin condition and triangle inequality to show that, because $(g,s)_1 \geq L$, we must have $d(h,s)\leq k-L+2\delta$.}
    \label{gromov to ball}
\end{figure}

The following lemma uses the previous result to deduce an upper bound on the average value of the Gromov product between all the points on a sphere and any arbitrary point:
\begin{lem}  \label{sphere bounding lemma}
    There exists a constant $C_4 > 0$ such that for all $g\in G$ and $k\geq 1$ we have
    \begin{equation*}
        \frac{1}{|S_k|}\sum_{s\in S_k}(g,s)_1 \leq C_4.
    \end{equation*}
\end{lem}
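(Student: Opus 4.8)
The plan is to combine the exponential-decay estimate of Lemma \ref{ball bounding lemma} with a layer-cake decomposition of the sum, and then to trade $|B_k|$ for $|S_k|$ using the uniform growth from Lemma \ref{growth lemma}. First I would record two elementary facts: for $s \in S_k$ the triangle inequality gives $0 \le (g,s)_1 \le |s| = k$, and for any non-negative real $x$ one has $x \le 1 + \#\{L \in \Z_{\ge 1} : x \ge L\}$ (here the crude inequality, rather than equality, is what one uses, since $(g,s)_1$ is only a half-integer). Applying the second fact with $x = (g,s)_1$ and summing over $s \in S_k$ yields
\[
\sum_{s\in S_k}(g,s)_1 \;\le\; |S_k| \;+\; \sum_{L=1}^{k}\bigl|\{s\in S_k : (g,s)_1\ge L\}\bigr|,
\]
where the range of $L$ is truncated at $k$ because $(g,s)_1 \le k$ makes the sets with $L>k$ empty.

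Next I would split the inner sum at $L_0 := 2\lceil\delta\rceil$. For $1 \le L < L_0$ each term is bounded trivially by $|S_k|$, contributing at most $(L_0-1)|S_k|$. For $L_0 \le L \le k$, the inclusion $\{s\in S_k : (g,s)_1\ge L\} \subseteq A_k(g,L)$ together with Lemma \ref{ball bounding lemma} gives $\bigl|\{s\in S_k : (g,s)_1\ge L\}\bigr| \le C_3 C_2^{-L}|B_k|$, so this part of the sum is at most $C_3|B_k|\sum_{L\ge L_0}C_2^{-L} = \frac{C_3 C_2^{-L_0}}{1-C_2^{-1}}\,|B_k|$, a convergent geometric series since $C_2 > 1$ (the case $k < L_0$, where this portion is vacuous, is harmlessly subsumed). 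It remains to pass from $|B_k|$ to $|S_k|$: from $|B_k| = |S_k| + |B_{k-1}|$ and $|B_{k-1}| \le C_2^{-1}|B_k|$ (Lemma \ref{growth lemma}) one gets $|S_k| \ge (1-C_2^{-1})|B_k|$, hence $|B_k|/|S_k| \le C_2/(C_2-1)$, a constant. Dividing the displayed inequality through by $|S_k|$ and assembling the pieces gives
\[
\frac{1}{|S_k|}\sum_{s\in S_k}(g,s)_1 \;\le\; L_0 \;+\; \frac{C_2}{C_2-1}\cdot\frac{C_3 C_2^{-L_0}}{1-C_2^{-1}} \;=:\; C_4 ,
\]
a quantity depending only on $\delta$, $C_2$, and $C_3$, and in particular independent of $g$ and $k$.

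I do not expect a serious obstacle: the statement reduces to the two cited lemmas plus bookkeeping. The one point requiring care is the small-$L$ range $1 \le L < 2\lceil\delta\rceil$, on which Lemma \ref{ball bounding lemma} is not available; this range cannot be made to decay and must instead be absorbed into the additive constant $L_0$ in $C_4$. A secondary subtlety is that the Gromov product takes half-integer values, which is precisely why one works with the inequality $x \le 1 + \#\{L : x \ge L\}$ and with integer thresholds matching the definition of $A_k(g,L)$, rather than with an exact layer-cake identity.
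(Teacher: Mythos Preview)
Your proof is correct and follows essentially the same approach as the paper's: a layer-cake decomposition split at $L_0=2\lceil\delta\rceil$, the inclusion $\{s\in S_k:(g,s)_1\ge L\}\subseteq A_k(g,L)$ together with Lemma~\ref{ball bounding lemma} for large $L$, and the ratio $|S_k|/|B_k|\ge 1-C_2^{-1}$ from Lemma~\ref{growth lemma} to pass from $|B_k|$ to $|S_k|$. The only cosmetic differences are that you use the superlevel-set form of layer-cake and a geometric series, whereas the paper uses level sets $D_k(g,L)$ and bounds $\sum_L L C_2^{-L}$ via the integral test; your handling of the half-integer issue is in fact a bit more careful than the paper's.
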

\begin{proof}
    Define the set
    \begin{equation*}
        D_k(g,L) \coloneqq \{s\in S_k|(g,s)_1 = L\}.
    \end{equation*}
    Then we have
    \begin{equation*}
        \frac{1}{|S_k|}\sum_{s\in S_k}(g,s)_1 = \sum_{L=0}^k L\frac{|D_k(g,L)|}{|S_k|}
    \end{equation*}
    because each $(g,s)_1 \leq |s| = k$ and all the $(g,s)_1$ are non-negative integers. Let $C_2$ be the constant from Lemma \ref{growth lemma}. Then
    \begin{equation*}
        \frac{|S_k|}{|B_k|} = \frac{|B_k|-|B_{k-1}|}{|B_k|}
        = 1 - \frac{|B_{k-1}|}{|B_k|}
        \geq 1-\frac{1}{C_2}
        \eqqcolon b>0.
    \end{equation*}
    Also, define $a \coloneqq \sum_{L=0}^{2\lceil \delta\rceil-1}L$, which is an upper bound for $\sum_{L=0}^{2\lceil \delta\rceil-1}L\frac{|D_k(g,L)|}{|S_k|}$ for all $k$ because $\frac{|D_k(g,L)|}{|S_k|}\leq 1$. Then because $D_k(g,L) \subset A_k(g,L)$, we can apply Lemma \ref{ball bounding lemma} to conclude that
    \begin{align*}
        \sum_{L=0}^k L\frac{|D_k(g,L)|}{|S_k|} &\leq a+\sum_{L=2\lceil \delta\rceil}^k L\frac{|D_k(g,L)|}{|S_k|} \leq a+\sum_{L=2\lceil \delta\rceil}^k L\frac{|A_k(g,L)|}{|S_k|}\\
        &\leq a+\sum_{L=2\lceil \delta\rceil}^k L\frac{|A_k(g,L)|}{b|B_k|}
        \leq a+\frac{C_3}{b}\sum_{L=0}^k LC_2^{-L},
    \end{align*}
    where $C_3$ is the constant from Lemma \ref{ball bounding lemma}. For $L\geq\frac{1}{\ln(C_2)}$, the sequence $(LC_2^{-L})_{L=0}^\infty$ is monotonically decreasing. Letting
    \begin{gather*}
        f \coloneqq \left\lfloor\frac{1}{\ln(C_2)}\right\rfloor, \\
        c \coloneqq \left\lceil\frac{1}{\ln(C_2)}\right\rceil, \\
        I \coloneqq \int_c^\infty xC_2^{-x}dx < \infty,
    \end{gather*}
    we have by the integral test that
    \begin{equation*}
        a+\frac{C_3}{b}\sum_{L=0}^k LC_2^{-L}
        < a+\frac{C_3}{b}\sum_{L=0}^\infty LC_2^{-L}
        \leq a+\frac{C_3}{b}\left(\sum_{L=0}^fLC_2^{-L} + cC_2^{-c}+I\right) \eqqcolon C_4,
    \end{equation*}
    where $C_4$ does not depend on $g$ or $k$. 
\end{proof}

The constants $C_1$ and $C_4$ can now be applied to prove our main result:
\begin{proof}[Proof of Theorem \ref{intro negative curvature for hyperbolic}]
    Fix $k\geq1$ and let $g \in G-B_{4k}$. For all $s \in S_k$,
    \begin{align*}
        d(s,gs) =& d(1,g)+d(1,s)+d(g,gs) \\
        &~-[d(g,1)+d(1,s)-d(g,s)] -[d(s,g)+d(g,gs)-d(s,gs)] \\
        =& |g|+2k-2(g,s)_1-2(s,gs)_g \\
        =& |g|+2k-2(g,s)_1-2(1,gs)_g+2[(1,gs)_g-(s,gs)_g].
    \end{align*}
    The curvature is then given by
    \begin{align*}
        \kappa_k^{\mathcal{S}}(g)
        =& \frac{|g|-\frac{1}{|S_k|}\sum_{s\in S_k}d(s,gs)}{|g|} \\
        =& \frac{|g|-\frac{1}{|S_k|}\sum_{s\in S_k}|g|+2k-2(g,s)_1-2(1,gs)_g + 2[(1,gs)_g-(s,gs)_g]}{|g|} \\
        =& \frac{-2k+\left(\frac{1}{|S_k|}\sum_{s\in S_k}2(g,s)_1+2(1,gs)_g - 2[(1,gs)_g-(s,gs)_g]\right)}{|g|},
    \end{align*}
    so
    \begin{align*}
        |g|\kappa_k^{\mathcal{S}}(g) +2k
        =& 2\left(\sum_{s\in S_k}\frac{(g,s)_1}{|S_k|} + \sum_{s\in S_k}\frac{(1,gs)_g}{|S_k|} - \frac{1}{|S_k|}\sum_{s\in S_k}[(1,gs)_g-(s,gs)_g]\right) \\
        \leq& 2(C_4+C_4+C_1).
    \end{align*}
    where $C_1$ and $C_4$ are the constants from Lemmas \ref{tree lemma} and \ref{sphere bounding lemma} respectively (the inequality for the $(1,gs)_g$ term follows from using left-invariance to rewrite it as $(g^{-1},s)_1$ and applying Lemma \ref{sphere bounding lemma}). We recall that the constants $C_1$ and $C_4$ are independent of $k$. Therefore, letting $K = \lceil2C_4+C_1+1\rceil$, we have for all $k\geq K$ and $g\in G-B_{4k}$ that
    \begin{equation*}
        \kappa_k^{\mathcal{S}}(g) \leq \frac{2(C_4+C_4+C_1)-2k}{|g|} < 0.
    \end{equation*}
\end{proof}

\subsection{Negatively Curved Non-Hyperbolic Groups}

With our proof of Theorem \ref{intro negative curvature for hyperbolic}, we have shown that hyperbolicity implies negative conjugation curvature for sufficiently large comparison radius. It is natural to ask whether negative curvature everywhere conversely implies that a group is hyperbolic. We conclude this paper by proving that this is not the case.

\begin{proof}[Proof of Theorem \ref{intro negative non-hyperbolic}]
    Gromov proved in \cite{Gro87} that no hyperbolic group contains $\Z^2$ as a subgroup. Therefore, $\Z*\Z^2$ is not a hyperbolic group.
    
    Using the presentation $\Z*\Z^2 = \langle a,b,t|ab=ba\rangle$, let $S_\text{neg}\coloneqq\{a,a^{-1},b,b^{-1},t,t^{-1}\}$, and we will show that we always have negative curvature with respect to $S_\text{neg}$. Suppose that $g_1\ldots g_n = h_1\ldots h_{n+1}$ for some generators $g_1,\ldots,g_n,h_1,\ldots, h_{n+1}\in S_\text{neg}$. Then $h_{n+1}^{-1}\ldots h_1^{-1}g_1\ldots g_n=1$, so this word with $2n+1$ letters can be reduced to the identity by applying free reductions and commuting the letters $a$, $a^{-1}$, $b$, and $b^{-1}$ with each other. Because free reductions reduce the number of letters by an even number and $2n+1$ is odd, we have a contradiction. Therefore, given any geodesic word of generators in $S_\text{neg}$, there does not exist a word with exactly one more generator that represents the same group element in $\Z*\Z^2$. Thus, by Lemma \ref{preserving lemma} we do not have $|sg| = |g|$ or $|gs|=|g|$ for any $g \in \Z*\Z^2$ or $s\in S_\text{neg}$.
    
    Let $g\in\Z*\Z^2-\{1\}$. From the definition of $\Z*\Z^2-\{1\}$, we see that a certain geodesic spelling of $g$ contains the letter $t^{\pm 1}$ in the $i$th position if and only if every geodesic spelling of $g$ does. Likewise, a geodesic spelling of $g$ contains an element of $\{a,a^{-1},b,b^{-1}\}$ in the $i$th position if and only if every geodesic spelling of $g$ contains an element of this set in the $i$th position, and no geodesic spelling of $g$ can contain $s$ in the $i$th position for $s\in \{a,a^{-1},b,b^{-1}\}$ if another geodesic spelling of $g$ contains $s^{-1}$ in the $i$th position.
    
    Suppose first that every geodesic spelling of $g$ does not include the letters $t$ or $t^{-1}$. Then no geodesic spelling of $tg$ or $t^{-1}g$ will end with $t$ or $t^{-1}$, so $|tgt^{-1}|=|t^{-1}gt|=|g|+2$ by applying Remark \ref{extending remark} twice. On the other hand, all the letters of a geodesic spelling of $g$ commute with the letters in $\{a,a^{-1},b,b^{-1}\}$, so for all $s\in\{a,a^{-1},b,b^{-1}\}$ we have $|s^{-1}gs| = |gs^{-1}s| = |g|$. Adding these together, we get that $\GenCon(g) = \frac{1}{6}\sum_{s\in S_\text{neg}}|s^{-1}gs| = |g|+\frac{4}{6}$ and $\kappa(g) = -\frac{2}{3|g|}<0$.

    Next suppose that every geodesic spelling of $g$ starts and ends with elements of $\{a,a^{-1},b,b^{-1}\}$ and also contains at least one element of $\{t,t^{-1}\}$. Then we still have $|tgt^{-1}|=|t^{-1}gt|=|g|+2$. There exist at most two generators $s\in\{a,a^{-1},b,b^{-1}\}$ such that $|sg|=|g|-1$, and at most two such that $|gs|=|g|-1$. Because every geodesic spelling of $g$ contains a letter $t$ or $t^{-1}$ that does not commute with the letters in $\{a,a^{-1},b,b^{-1}\}$, we have $|s^{-1}gs|=|gs|-1$ if and only if $|s^{-1}g| = |g|-1$. Therefore,
    \begin{align*}
        \kappa(g) &= \frac{|g|-\GenCon(g)}{|g|} = \frac{|g|-\frac{1}{6}\sum_{s\in S_\text{neg}}|s^{-1}gs|}{|g|} \\
        &= \frac{\frac{1}{6}\sum_{s\in S_\text{neg}}(|g|-|gs|) + \frac{1}{6}\sum_{s\in S_\text{neg}}(|gs|-|s^{-1}gs|)}{|g|}\\ &\leq \frac{\frac{1}{6}(-2)+\frac{1}{6}(-2)}{|g|} = -\frac{2}{3|g|} < 0.
    \end{align*}
    
    If every geodesic spelling of $g$ starts and ends with elements of $\{t,t^{-1}\}$, then for all $s\in\{a,a^{-1},b,b^{-1}\}$ we have $|s^{-1}gs| = |g|+2$. Therefore, because the minimum possible value of $|s^{-1}gs|$ for any $s\in S_\text{neg}$ is $|g|-2$ by the triangle inequality, we have $\GenCon(g) = \frac{1}{6}\sum_{s\in S_\text{neg}}|s^{-1}gs| \geq |g|+\frac{4}{6}$ and $\kappa(g) \leq -\frac{2}{3|g|}<0$.
    
    Finally, suppose that every geodesic spelling of $g$ either starts or ends with $t^{\pm 1}$ and ends or starts with an element of $\{a,a^{-1},b,b^{-1}\}$. Without loss of generality, say that every geodesic spelling of $g$ starts with $t$, and that at least one geodesic spelling of $g$ ends with $a$. Then we must have
    \begin{align*}
        |tgt^{-1}| &= |g|+2,\\
        |a^{-1}ga| &= |g|+2,\\
    \end{align*}
    and $|s^{-1}gs|\geq |g|$ for all $s\in \{t,a^{-1},b,b^{-1}\}$. Therefore, we have $\GenCon(g) = \frac{1}{6}\sum_{s\in S_\text{neg}}|s^{-1}gs| \geq |g|+\frac{4}{6}$ and $\kappa(g) \leq -\frac{2}{3|g|}<0$.
    
    We can thus conclude that $\kappa(g)\leq -\frac{2}{3|g|}<0$ for all $g\in \Z_2*\Z_3-\{1\}$. Therefore, $\Z_2*\Z_3$ is a non-hyperbolic group that has negative curvature for all non-identity points.
    \end{proof}

\printbibliography

\end{document}